\newtheorem{theorem}{Theorem}[section]
\newtheorem{lemma}[theorem]{Lemma}
\newtheorem{proposition}[theorem]{Proposition}
\theoremstyle{definition}
\theoremstyle{remark}
\newtheorem{remark}[theorem]{Remark}
\numberwithin{equation}{section}
\newcommand{\N}{\mathbb{N}}
\newcommand{\R}{\mathbb{R}}
\newcommand{\rn}{\mathbb{R}^n}
\newcommand{\sn}{\mathbb{S}^{n-1}}
\begin{document}

\title[ ]
{Boundedness properties in a family of weighted Morrey spaces with emphasis on power weights}

\author{Javier Duoandikoetxea and Marcel Rosenthal} 
\address{Universidad del Pa\'is Vasco/Euskal Herriko Unibertsitatea UPV/EHU, Departamento de Matem\'a\-ti\-cas, 
Apartado 644, 48080 Bilbao, Spain}

\email{javier.duoandikoetxea@ehu.eus, marcel.rosenthal@uni-jena.de} 
\subjclass[2010]{42B25, 42B35, 46E30, 42B20}

\keywords{Morrey spaces, Muckenhoupt weights, extrapolation, Hardy-Littlewood maximal operator, Calder\'{o}n-Zygmund operators} 

\begin{abstract}
We define a scale of weighted Morrey spaces which contains different weighted versions appearing in the literature. This allows us to obtain weighted estimates for operators in a unified way. In general, we obtain results for weights of the form $|x|^\alpha w(x)$ with $w\in A_p$ and nonnegative $\alpha$. We study particularly some properties of power-weighted spaces and in the case of the Hardy-Littlewood maximal operator our results for such spaces are sharp. By using extrapolation techniques the results are given in abstract form in such a way that they are automatically obtained for many operators. 
\end{abstract}

\maketitle

\section{Introduction}\label{intro}

We consider a scale of weighted Morrey spaces. The weight $w$ is a nonnegative measurable function.
For $0< p<\infty$, $\lambda_1\in \R$, $\lambda_2 \in \R$, let  $\mathcal L^{p,\lambda}(w)$, $\lambda=(\lambda_1,\lambda_2)$, be the Morrey space formed by the collection of all measurable functions $f$ such that 
\begin{equation}\label{normdef}
\aligned
&\|f\|_{\mathcal L^{p,\lambda}(w)} := \sup_{x\in \rn, r>0}\left(\frac 1{r^{\lambda_1}\ w(B(x,r))^{\lambda_2/n}}\int_{B(x,r)}|f|^p w\right)^{1/p}\\
																 &\cong 
																\sup_{x\in \rn, r>0}\left(\frac 1{|B(x,r)|^{\lambda_1/n}\ w(B(x,r))^{\lambda_2/n}}\int_{B(x,r)}|f|^p w\right)^{1/p} <\infty.
\endaligned
\end{equation}
(Here and in what follows $w$ is locally integrable and $w(A)$ stands for the integral of $w$ over $A$ and $|A|$ for the Lebesgue measure of $A$.) 
We also consider the weak Morrey space $W\mathcal L^{p,\lambda}(w)$, for which 
\begin{equation}\label{normdefw}
\|f\|_{W\mathcal L^{p,\lambda}(w)}:=\sup_{x\in \rn, r>0, t>0}\left(\frac {t^p\,w(\{y\in B(x,r): |f(y)|>t\})}{|B(x,r)|^{\lambda_1/n}\ w(B(x,r))^{\lambda_2/n}}\right)^{1/p}<\infty.
\end{equation}
Clearly, $\mathcal L^{p,\lambda}(w)\subset W\mathcal L^{p,\lambda}(w)$. Several conditions depending on $w$ will be imposed to $\lambda_1$ and $\lambda_2$ so that the involved Morrey spaces are not reduced to the zero function. 

This general formulation encompasses three interesting special cases of weighted Morrey spaces that have been considered in the literature, namely,
\begin{itemize}
\item the case $\lambda_1=0$ and $0<\lambda_2<n$ corresponds to the spaces introduced by Komori and Shirai in \cite{KS09};
\item  the case $0<\lambda_1<n$ and $\lambda_2=0$ corresponds to the spaces introduced by N.~Samko in \cite{Sam09};
\item the case $\lambda_1<0$ and $\lambda_2=n$ corresponds to the spaces considered by Poelhuis and Torchinsky in \cite{PT15}, built as the weighted versions of the spaces in \cite{SST11} (although a more general function of $r$ appears there).
\end{itemize}

There are many papers devoted to the study of the boundedness of operators in weighted Morrey spaces, mainly defined in the way of the first two cases above, with some generalizations. In \cite{DR18} we studied the extension of inequalities on weighted Lebesgue spaces with $A_p$ weights to Morrey spaces of the first two types mentioned in the previous paragraph (see also \cite{RS16} for the first type), using techniques which are related to the extrapolation results in the Lebesgue setting. This provides immediately the boundedness on weighted Morrey spaces of a variety of operators. The results in \cite{DR18} were extended in \cite{DR19} to weights of the form $|x|^\alpha w(x)$, but only for the case of Samko-type spaces ($\lambda_2=0$). For fixed $p$ such weights are beyond $A_p$. In this paper we consider the same type of weights for the Morrey spaces defined above and results beyond $A_p$ are thus obtained for all of them. In particular, we emphasize the case of power weights for which the results are optimal.     

In Section \ref{bi} we discuss several properties of the spaces in the family and, in particular, we give conditions on $\lambda_1$ and $\lambda_2$ (depending on the weight) which guarantee the existence of nontrivial functions in the weighted Morrey space. For power weights we fully describe such conditions (Proposition \ref{betaspaces}). An interesting observation (see Proposition \ref{reduct}) is the possibility of restricting the definition of the norm in \eqref{normdef} to special balls, namely, to $B(x,r)$ with $r\le |x|/4$. This is particularly helpful for power weights and can be used to identify power weighted spaces with different values of $\lambda_1$ and $\lambda_2$ (for instance,  in \eqref{samkomori} below Komori-Shirai and N. Samko type spaces are identified). 

In Section \ref{hiru} we deal with the Hardy-Littlewood maximal operator. When $\lambda_1$ and $\lambda_2$ are nonnegative we prove that the boundedness of the  on $L^{p,\lambda}(w)$ implies that $w$ must belong to a certain Muckenhoupt class $A_q$, with $q=q(n,p,\lambda)>p$. In the case of power weights with positive exponent this class is sharp. The sharp range for power weights is also obtained in the cases in which $\lambda_1$ is negative, thus covering the interesting case $\lambda_2=n$. 

In Section \ref{bost} we work in the setting of the extrapolation theorem: if an operator is bounded on $L^{p_0}(w)$ for all $w\in A_{p_0}$, we deduce that it is bounded on $\mathcal L^{p,\lambda}(|x|^\alpha w)$ for $w\in A_p$ and $\alpha$ in a range depending on $\lambda$ and $n$. This result immediately applies to many operators as can be seen in \cite[Section 5]{DR18}. We also show a variant of the extrapolation theorem with a weaker assumption that is useful for many other operators. 

We are using power weights of the form $|x|^\beta$, centered at the origin. But it is clear that the same results hold for powers of the form $|x-x_0|^\beta$, with $x_0\in \rn$. 


\section{Preliminary results and significant properties of (power) weighted Morrey spaces}\label{bi}

Let 
$w\in L_1^\textrm{loc}(\rn)$ with $w>0$ almost everywhere. We say that $w$ is a \textit{Muckenhoupt weight} in the class $A_p$ for $1<p<\infty$ if
\begin{equation} \label{defap}
  [w]_{A_p}\equiv 
	\sup_B \frac{w(B)}{|B|} \left( \frac{w^{1-p'}(B)}{|B|} \right)^{p-1}<\infty,
\end{equation}
where the supremum is taken over all Euclidean balls $B$ in $\rn$. The $A_p$ constant of $w$ is the quantity  $[w]_{A_p}$ of the definition.
We say that $w$  is in $A_1$ if, for any Euclidean ball $B$,
\begin{equation} \label{A1}
  \frac{w(B)}{|B|}\le c w(x) \text{ for almost all } x\in B.
\end{equation}
The $A_1$ constant of $w$, denoted by  $[w]_{A_1}$, is the smallest constant $c$ for which the inequality holds. 

We say that a nonnegative locally integrable function $w$ on $\rn$ belongs to the \textit{reverse H\"older class} $RH_\sigma$ for $1<\sigma<\infty$ if it satisfies the \textit{reverse H\"older inequality} with exponent $\sigma$, that is,  \begin{equation*}
  \left(\frac{1}{|B|}\int_{B} w(x)^\sigma dx \right)^\frac{1}{\sigma}\le \frac{C}{|B|} \int_{B} w(x) dx,
\end{equation*}
where the constant $C$ is independent of the Euclidean ball $B\subset \rn$. 
The $A_p$ classes are increasing with $p$ and the union of all of them is denoted by $A_\infty$. The $RH_\sigma$ classes are decreasing with $\sigma$. Their union for $1<\sigma<\infty$ coincides with $A_\infty$. That is, every $A_p$ weight is in some class $RH_\sigma$ and every weight satisfying a reverse H\"older inequality is in some $A_p$.

To each $w\in A_\infty$ one can associate $\sigma_w\in (1,\infty]$ in the following way:
\begin{equation}\label{bestrhe}
\sigma_w=\sup\{\sigma: w\in RH_\sigma\}.
\end{equation}
According to Gehring's lemma, if $w\in RH_\sigma$, then $w\in RH_{\sigma+\epsilon}$ for some $\epsilon>0$, hence the supremum in \eqref{bestrhe} is not attained.

It was proved in \cite{SW85} that  $w\in RH_\sigma$ if and only if  $w^\sigma\in A_\infty$. By factorization the product of an $A_\infty$ weight and a negative power of an $A_1$ weight is in $A_\infty$. Positive powers of $|x|$ are negative powers of $A_1$ weights because $|x|^\gamma\in A_1$ for $-n<\gamma\le 0$. Therefore, if $w\in RH_\sigma$ and $\alpha>0$, then $|x|^\alpha w\in RH_\sigma$. Using the notation of \eqref{bestrhe}, $\sigma_{|x|^\alpha w}\ge \sigma_w$.

Two results for weights that we use repeatedly are the following.
\begin{itemize}
\item[(i)] If $Mh<\infty$ a.e., then $(Mh)^{1/s}\in A_1$ and its $A_1$ constant depends on $s$, but not on $h$. Moreover, $(Mh)^{1/s}\in A_1\cap RH_\sigma$ if $s>\sigma$.
\item[(ii)] Let $\sigma<\sigma_w$ (that is, $w\in RH_\sigma$). For any ball $B$ and any measurable $E\subset B$ it holds that 
\begin{equation}\label{ineqRH}
 \frac{w(E)}{w(B)}\le c \left(\frac{|E|}{|B|}\right)^{1/\sigma'}. 
\end{equation}
\end{itemize}

Under certain conditions on the weight, the balls considered in the definition of the norm can be restricted to a specific type. We study this reduction in a more general formulation. For $1\le p<\infty$ and nonnegative $u$ and $w$, define 
\begin{equation} \label{genuv}
\|f\|_{\mathcal L^{p}(u,w)} := \sup_{B}\left(\frac 1{u(B)}\int_{B}|f|^p w\right)^{1/p},
\end{equation}
where the supremum is taken over all the balls in $\rn$. Let us classify the balls into three types: 
\begin{itemize}
\item \textit{Type I}: balls centered at the origin;
\item \textit{Type II}: balls $B(x,r)$ centered at $x\ne 0$ and such that $r\le |x|/4$;
\item \textit{Type III}: balls $B(x,r)$ centered at $x\ne 0$ and such that $r> |x|/4$.
\end{itemize}

\begin{proposition}\label{reduct}
\begin{enumerate}
\item If $u$ is doubling, we get a value equivalent to $\|f\|_{\mathcal L^{p}(u,w)}$ by taking the supremum only on balls of type I and II.
\item If moreover $u$ satisfies $\sum_{j=0}^\infty u(B(0,a^jR))\le C u(B(0,R))$ for $a<1$ and $C$ independent of $R$, then we can further restrict the supremum to balls of type II.
\end{enumerate}
\end{proposition}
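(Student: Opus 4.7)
The plan is to absorb one ball type into the others via inclusion and doubling. For part (1), I would fix a Type III ball $B(x,r)$ (so that $|x|<4r$) and use the chain $B(x,r)\subset B(0,5r)\subset B(x,9r)$ together with the doubling of $u$ around $x$ to obtain $u(B(0,5r))\le C\,u(B(x,r))$ with $C$ depending only on the doubling constant. Then
\[
\frac{1}{u(B(x,r))}\int_{B(x,r)}|f|^p w \le \frac{C}{u(B(0,5r))}\int_{B(0,5r)}|f|^p w,
\]
and since $B(0,5r)$ is of Type I, the supremum over Type III balls is controlled by the supremum over balls of types I and II alone.

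For part (2), I would reverse the direction and absorb Type I balls into Type II balls. Given a Type I ball $B(0,R)$, decompose $B(0,R)\setminus\{0\}$ into the annuli $A_j=\{x\in\rn:a^{j+1}R\le|x|<a^jR\}$, $j\ge 0$, where $a$ is the constant from the hypothesis. A standard geometric covering argument shows that each $A_j$ can be covered by $K$ balls $B_{j,k}=B(x_{j,k},a^{j+1}R/4)$ with $x_{j,k}\in A_j$, where $K$ depends only on $n$ and $a$. Since $|x_{j,k}|\ge a^{j+1}R$, the condition $a^{j+1}R/4\le|x_{j,k}|/4$ holds, so each $B_{j,k}$ is of Type II; moreover $B_{j,k}\subset B(0,2a^jR)$, so doubling of $u$ yields $u(B_{j,k})\le C\,u(B(0,a^jR))$.

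Writing $N^p:=\sup_{B\text{ of Type II}}u(B)^{-1}\int_B|f|^pw$ and summing,
\[
\int_{B(0,R)}|f|^pw\le \sum_{j\ge 0}\sum_{k=1}^{K}u(B_{j,k})\,N^p\le CK\,N^p\sum_{j\ge 0}u(B(0,a^jR))\le C'\,u(B(0,R))\,N^p,
\]
the last step being exactly the summability hypothesis. This absorbs Type I balls into Type II balls, and together with part (1) gives the reduction to Type II balls only.

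The only slightly delicate point is the construction of the annular covering together with the verification that each covering ball is of Type II (i.e.\ its radius is at most one-quarter of the distance from its center to the origin); both reduce to elementary geometric estimates and I do not expect any serious obstacle here.
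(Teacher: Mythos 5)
Your argument is correct and follows essentially the same route as the paper: for type III balls you sandwich a centered ball between $B(x,r)$ and a fixed dilate $B(x,9r)$ and use doubling, and for centered balls you decompose into annuli, cover each annulus by a bounded number of type II balls, and invoke the summability hypothesis. The only (harmless) difference is your choice of covering balls of radius $a^{j+1}R/4$, which are of type II for every $a\in(0,1)$ at the cost of one extra application of doubling, whereas the paper takes radii equal to the annulus width and then needs a specific $a$ (e.g.\ $a=7/9$) to make the covering balls type II.
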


The second reduction is inspired by Lemma 1.1 of \cite{NST19}.

\begin{proof}
(1) For balls of type III we have $B(x,r)\subset B(0, |x|+r)\subset B(x, 9r)$. Set $R=|x|+r$. Then
\begin{equation*}
\aligned
\frac 1{u(B(x,r))}\int_{B(x,r)}|f|^p w&\le \frac {u(B(0, R))}{u(B(x,r))}\frac 1{u(B(0, R)))}\int_{u(B(0, R))}|f|^p w\\
&\le \frac {u(B(x, 9r))}{u(B(x,r))}\frac 1{u(B(0, R)))}\int_{u(B(0, R))}|f|^p w,
\endaligned
\end{equation*}
 and the first quotient in the last term is bounded if $u$ is doubling.
 
 (2) Fix $a\in (0,1)$. Then 
 \[
B(0, R)=\bigcup_{j=0}^\infty B(0, a^jR)\setminus B(0, a^{j+1}R).
 \]
Set $A_j:= B(0, a^jR)\setminus B(0, a^{j+1}R)$. There exists $C(n)$ depending only on the dimension $n$ such that the annulus $A_j$ can be covered by $C(n)$ balls $B_{k,j}:=B(y_{k,j},r_j)$, $k=1,\dots,C(n)$, for which $|y_{k,j}|=a^j(1+a)/2$ (that is, $y_{k,j}$ is equidistant from the spheres of the boundary of the annulus) and $r_j=a^j(1-a)$ (the width of the annulus). Consequently,
 \begin{equation*}
\aligned
&\frac 1{u(B(0, R))}\int_{u(B(0, R))}|f|^p w =\frac 1{u(B(0, R))}\sum_{j=0}^\infty \int_{A_j}|f|^p w\\
&\qquad\le \frac 1{u(B(0, R))}\,C(n)\sum_{j=0}^\infty u(B(0, a^jR)) \sup_{k,j}\frac 1{u(B_{k,j})}\int_{B_{k,j}}|f|^p w.
\endaligned
\end{equation*}
The balls $B_{k,j}$ are of type II if $4 a^j(1-a)\le a^j(1+a)/2$ and this holds for $a=7/9$, for instance.
\end{proof}

\begin{remark}\label{remred}
For the spaces $\mathcal L^{p,\lambda}(w)$ considered in this paper we have $u(B(x,r))= r^{\lambda_1}w(B(x,r))^{\lambda_2/n}$, which is doubling if $w$ is doubling. Moreover, if $w\in RH_\sigma$ and $\lambda_2\ge 0$, using \eqref{ineqRH}  we have
\begin{equation}\label{redsigma}
u(B(0,a^jR))\le C a^{j(\lambda_1+\lambda_2/\sigma')} u(B(0,R))
\end{equation}
and the condition required in part 2 of the proposition holds for $\lambda_1\sigma'+\lambda_2>0$. Using the definition of $\sigma_w$ of \eqref{bestrhe} the reduction to balls of type II is possible as far as $\lambda_1\sigma_w'+\lambda_2> 0$. Notice that this condition is always satisfied when both $\lambda_1$ and $\lambda_2$ are nonnegative. The condition is also sufficient for weights $|x|^\alpha w$ (with $\alpha>0$) because $\sigma_{|x|^\alpha w}\ge \sigma_w$ as mentioned above. 

In the case of power weights, $w(x)=|x|^\beta$, $\beta>-n$, we have the following result: the reduction to balls of type II is possible if  $\lambda_1+\lambda_2(1+\beta/n)>0$. Indeed, $\lambda_1+\lambda_2(1+\beta/n)$ is the exponent in \eqref{redsigma} in this case. Notice that here we are not assuming that $\lambda_2$ is nonnegative. For $\lambda_2\ge 0$ and $-n<\beta\le 0$ this result is the same as before because $\sigma_w'= n/(n+\beta)$; for $\beta>0$ it is better, because $\sigma_w'=1$.
\end{remark}
 
 \begin{remark}
If we consider weak-type spaces by modifying the definition \eqref{genuv}, it is immediate to check that the same proof works. Then the statement of Proposition \ref{reduct} remains true for weak-type Morrey spaces. 
\end{remark}

The restriction $\lambda_1+\lambda_2<n$ is natural as shown in the following proposition. 
\begin{proposition}\label{special}
Let $w$ be locally integrable and $w(x)>0$ a.e. Then if $\lambda_1+\lambda_2>n$, the space $\mathcal L^{p,\lambda}(w)$ only contains the null function. If $\lambda_1+\lambda_2=n$ and $0\le \lambda_2\le n$, then $\mathcal L^{p,\lambda}(w)=\{f: |f|^p w^{1-\lambda_2/n}\in L^\infty\}$.
\end{proposition}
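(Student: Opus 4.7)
The key tool for both parts is the Lebesgue differentiation theorem applied at Lebesgue points to the quantity inside the supremum in \eqref{normdef}. Using $|B(x,r)| = c_n r^n$, one can factor
\[
\frac{1}{r^{\lambda_1} w(B(x,r))^{\lambda_2/n}} \int_{B(x,r)} |f|^p w = c_n^{1-\lambda_2/n}\, r^{n-\lambda_1-\lambda_2} \left(\frac{w(B(x,r))}{|B(x,r)|}\right)^{-\lambda_2/n} \frac{1}{|B(x,r)|}\int_{B(x,r)} |f|^p w.
\]
Since both $w$ and $|f|^p w$ are locally integrable, at almost every $x\in\rn$ the two averages on the right converge as $r\to 0$ to $w(x)$ and $|f(x)|^p w(x)$ respectively. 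Hence at a.e.\ such $x$ the whole expression behaves like $C\, r^{n-\lambda_1-\lambda_2}\, |f(x)|^p w(x)^{1-\lambda_2/n}$ for small $r$; because $w>0$ almost everywhere and local integrability gives $w<\infty$ almost everywhere, the factor $w(x)^{1-\lambda_2/n}$ is finite and positive for a.e.\ $x$.

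Part (1) is then immediate: when $n-\lambda_1-\lambda_2<0$, the factor $r^{n-\lambda_1-\lambda_2}$ blows up as $r\to 0$, so finiteness of $\|f\|_{\mathcal L^{p,\lambda}(w)}$ forces $|f(x)|^p w(x)^{1-\lambda_2/n}=0$ a.e., which gives $f=0$ a.e.

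For part (2) the exponent $n-\lambda_1-\lambda_2$ vanishes, and the same calculation yields the pointwise inequality $|f(x)|^p w(x)^{1-\lambda_2/n}\le C\,\|f\|_{\mathcal L^{p,\lambda}(w)}^p$ at a.e.\ Lebesgue point, so $|f|^pw^{1-\lambda_2/n}\in L^\infty$. For the reverse inclusion I would assume $|f|^p w^{1-\lambda_2/n}\le K$ a.e.\ and write
\[
\int_{B(x,r)} |f|^p w \le K \int_{B(x,r)} w^{\lambda_2/n};
\]
H\"older's inequality with exponents $n/\lambda_2$ and $n/(n-\lambda_2)$ (when $0<\lambda_2<n$; the endpoints $\lambda_2=0$ and $\lambda_2=n$ are checked directly) then gives $\int_{B(x,r)} w^{\lambda_2/n}\le |B(x,r)|^{1-\lambda_2/n} w(B(x,r))^{\lambda_2/n}$. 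Since $|B(x,r)|^{1-\lambda_2/n}$ is a constant multiple of $r^{n-\lambda_2}=r^{\lambda_1}$, dividing by $r^{\lambda_1} w(B(x,r))^{\lambda_2/n}$ produces a uniform bound over all balls, hence a bound on the norm by a multiple of $K^{1/p}$.

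The only subtlety I foresee is the simultaneous Lebesgue-differentiation step: I must work at points that are Lebesgue points of both $w$ and $|f|^pw$ and avoid the null sets where $w$ vanishes or is infinite. The hypotheses $w>0$ a.e.\ and $w$ locally integrable take care of this. Everything else is exponent-bookkeeping, and the restriction $0\le\lambda_2\le n$ in part (2) is exactly what is needed to apply H\"older's inequality in the reverse direction.
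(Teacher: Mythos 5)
Your argument is correct and follows essentially the same route as the paper: the same factorization isolating $r^{n-\lambda_1-\lambda_2}$, Lebesgue differentiation at points where $0<w(x)<\infty$ to get the pointwise limit $|f(x)|^p w(x)^{1-\lambda_2/n}$ (giving triviality when $\lambda_1+\lambda_2>n$ and the $L^\infty$ bound when $\lambda_1+\lambda_2=n$), and H\"older's inequality with exponents $n/\lambda_2$, $n/(n-\lambda_2)$ for the converse inclusion. No gaps; your explicit treatment of the endpoints $\lambda_2=0$ and $\lambda_2=n$ and of the exceptional null sets is just a slightly more detailed version of the paper's proof.
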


\begin{proof}
We can write
\begin{equation*}
\frac 1{r^{\lambda_1}\ w(B(x,r))^{\lambda_2/n}}\int_{B(x,r)}|f|^pw=\frac {r^{n-\lambda_1-\lambda_2}}{\left(\dfrac {w(B(x,r)}{r^n}\right)^{\lambda_2/n}}\frac 1{r^n}\int_{B(x,r)}|f|^pw.
\end{equation*}
If $\lambda_1+\lambda_2=n$, then the limit when $r$ goes to zero is $|f(x)|^p w(x)^{1-\lambda_2/n}$ a.e. and therefore this function is in $L^\infty$. The converse also holds: if $|f|^p w^{1-\lambda_2/n}\in L^\infty$, then $f$ is in the Morrey space. Indeed,
\begin{equation*}
\frac 1{r^{n-\lambda_2}\ w(B)^{\lambda_2/n}}\int_{B}|f|^pw\le \||f|^p w^{1-\lambda_2/n}\|_\infty\, \dfrac {\int_B w^{\lambda_2/n}}{r^{n-\lambda_2}\ w(B)^{\lambda_2/n}},
\end{equation*}
and the result follows from H\"older's inequality.

If $\lambda_1+\lambda_2>n$, the exponent of $r^{n-\lambda_1-\lambda_2}$ is negative and the limit when $r$ goes to zero is infinity unless $f$ is identically zero.
\end{proof}

Let us prove that for nonnegative values of $\lambda_1$ and $\lambda_2$ there is a Lebesgue space embedded in the Morrey spaces. 
\begin{lemma}\label{emb0.5}
Let $1\le p<\infty$, $0\le \lambda_1, \lambda_2 <n$ and $\lambda_1 + \lambda_2 < n$.  The embedding \[L^{\frac {pn}{n-\lambda_1-\lambda_2}}(w^{\frac{n-\lambda_2}{n-\lambda_1-\lambda_2}})\hookrightarrow \mathcal L^{p,\lambda}(w)\] holds with constant depending only on $n$, $\lambda$ and $p$, not on $w$.
\end{lemma}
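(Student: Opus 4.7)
The plan is a two-step Hölder argument. Set $q=\frac{pn}{n-\lambda_1-\lambda_2}$ and $v = w^{\frac{n-\lambda_2}{n-\lambda_1-\lambda_2}}$. Since $\lambda_1+\lambda_2<n$ we have $q>p$ (and if $\lambda_1=\lambda_2=0$ the embedding is the identity, so assume $\lambda_1+\lambda_2>0$). For any ball $B$, write
\begin{equation*}
|f|^p w = \bigl(|f|^p v^{p/q}\bigr)\cdot \bigl(w\,v^{-p/q}\bigr)
\end{equation*}
and apply Hölder's inequality with the conjugate exponents $q/p$ and $q/(q-p)$:
\begin{equation*}
\int_B |f|^p w \le \left(\int_B |f|^q v\right)^{p/q}\left(\int_B (w\,v^{-p/q})^{q/(q-p)}\right)^{(q-p)/q}.
\end{equation*}

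Next I would do the arithmetic on exponents of $w$ inside the second factor. Using $\frac{p}{q-p}=\frac{n-\lambda_1-\lambda_2}{\lambda_1+\lambda_2}$ and the definition of $v$, the inner integrand simplifies to
\begin{equation*}
(w\,v^{-p/q})^{q/(q-p)} = w^{\frac{\lambda_2}{\lambda_1+\lambda_2}}.
\end{equation*}
Since $\lambda_1,\lambda_2\ge 0$, the exponent $\lambda_2/(\lambda_1+\lambda_2)$ is in $[0,1]$, so a second application of Hölder's inequality (with exponents $(\lambda_1+\lambda_2)/\lambda_2$ and $(\lambda_1+\lambda_2)/\lambda_1$, and with trivial modifications if one of the $\lambda_i$ vanishes) gives
\begin{equation*}
\int_B w^{\frac{\lambda_2}{\lambda_1+\lambda_2}} \le w(B)^{\frac{\lambda_2}{\lambda_1+\lambda_2}}\,|B|^{\frac{\lambda_1}{\lambda_1+\lambda_2}}.
\end{equation*}

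Finally I would combine everything: using $\frac{q-p}{q}=\frac{\lambda_1+\lambda_2}{n}$, the two fractional exponents on $w(B)$ and $|B|$ multiply out to exactly $\lambda_2/n$ and $\lambda_1/n$ respectively, yielding
\begin{equation*}
\int_B |f|^p w \le \|f\|_{L^q(v)}^{p}\,|B|^{\lambda_1/n}\,w(B)^{\lambda_2/n}.
\end{equation*}
Dividing by $|B|^{\lambda_1/n}w(B)^{\lambda_2/n}$, taking the $p$-th root and the supremum over $B$ completes the proof, and the bound is manifestly independent of $w$.

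There is no real obstacle here; the only thing to be careful about is the bookkeeping of exponents and handling the degenerate cases $\lambda_1=0$ or $\lambda_2=0$ (in both of which one of the Hölder steps degenerates to a trivial equality). The choice of the weight $v=w^{(n-\lambda_2)/(n-\lambda_1-\lambda_2)}$ is precisely what makes the first Hölder come out with $w^{\lambda_2/(\lambda_1+\lambda_2)}$, which in turn is exactly what the geometric mean of $|B|$ and $w(B)$ with the right weights asks for.
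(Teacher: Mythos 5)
Your proof is correct and is essentially the same as the paper's: the paper applies H\"older's inequality with three functions at once (exponents $\tfrac{n}{n-\lambda_1-\lambda_2}$, $\tfrac{n}{\lambda_2}$, $\tfrac{n}{\lambda_1}$ applied to the factorization $|f|^p w = (|f|^p w^{1-\lambda_2/n})\cdot w^{\lambda_2/n}\cdot 1$), which is exactly your two successive two-function H\"older steps unrolled, with the same exponent bookkeeping and the same $w$-independent constant. Your explicit handling of the degenerate cases $\lambda_1=0$ or $\lambda_2=0$ is a harmless refinement of the same argument.
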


\begin{proof}
Let $B$ be a ball. Then, by H\"older's inequality with three functions,
\begin{equation*}
\aligned
&\frac 1{|B|^{\lambda_1/n}\ w(B)^{\lambda_2/n}}\int_{B}|f|^pw
\\ &\le  \frac 1{|B|^{\lambda_1/n}\ w(B)^{\lambda_2/n}}\left(\int_{B}\left(|f|^pw^\theta\right)^{s_1}\right)^\frac{1}{s_1} 
\left(\int_{B}w^{(1-\theta){s_2}}\right)^\frac{1}{s_2} \left(\int_{B}1\right)^\frac{1}{s_3} 
\\ &\le  \|f\|_{L^{\frac {pn}{n-\lambda_1-\lambda_2}}(w^{\frac{n-\lambda_2}{n-\lambda_1-\lambda_2}})}^p,  
\endaligned
\end{equation*}
where we choose $s_3=n/\lambda_1$, $s_2=n/\lambda_2$, $(1-\theta)s_2=1$ (that is, $\theta=1-\lambda_2/n$) and $1/s_1+1/s_2+1/s_3=1$  (that is, $1/s_1=1-(\lambda_1+\lambda_2)/n$).
\end{proof}

When $\lambda_1, \lambda_2\ge 0$ and at least one of them is not zero, the term appearing in front of the integral in Definition \eqref{normdef} is decreasing with $r$ and tends to zero as $r$ tends to infinity. This is not necessarily true if one of the values of $\lambda_1$ or  $\lambda_2$ is negative, but it holds if $\lambda_2$ is nonnegative and we impose the condition $\lambda_1 {\sigma_w'}+ {\lambda_2 }> 0$, which already appeared in Remark \ref{remred}.

\begin{proposition} \label{noninc}
Let  $w$ a weight and $\sigma_w$ as defined in \eqref{bestrhe}. Let $\lambda_2\ge 0$ and $\lambda_1 {\sigma_w'}+ {\lambda_2 }> 0$. Then there exists $C$ independent of $x$, $r$, and $R$ such that
\begin{equation*}
\frac{r^{\lambda_1}\ w(B(x,r))^{\lambda_2/n}}{R^{\lambda_1}\ w(B(x,R))^{\lambda_2/n}}\le C \qquad \text{for all }R>r.
\end{equation*}
In particular, $\lim_{R\to\infty}R^{\lambda_1}\ w(B(x,R))^{\lambda_2/n}=\infty$.
\end{proposition}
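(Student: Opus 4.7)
The plan is to reduce everything to a single application of the reverse Hölder inequality stated as fact (ii) in the preliminaries. Since $R > r$ implies $B(x,r) \subset B(x,R)$, for any $\sigma$ with $1 < \sigma < \sigma_w$ the weight $w$ lies in $RH_\sigma$, and fact (ii) yields
\begin{equation*}
\frac{w(B(x,r))}{w(B(x,R))} \le c \left(\frac{r}{R}\right)^{n/\sigma'}.
\end{equation*}
Raising to the nonnegative power $\lambda_2/n$ and multiplying by $(r/R)^{\lambda_1}$ gives
\begin{equation*}
\frac{r^{\lambda_1} w(B(x,r))^{\lambda_2/n}}{R^{\lambda_1} w(B(x,R))^{\lambda_2/n}} \le c^{\lambda_2/n} \left(\frac{r}{R}\right)^{\lambda_1 + \lambda_2/\sigma'}.
\end{equation*}

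The key step is then to select $\sigma$ so that the exponent $\lambda_1 + \lambda_2/\sigma'$ (equivalently $\lambda_1 \sigma' + \lambda_2$) is strictly positive. If $\lambda_1 \ge 0$ any admissible $\sigma$ works, since $\sigma' > \sigma_w'$ gives $\lambda_1 \sigma' + \lambda_2 \ge \lambda_1 \sigma_w' + \lambda_2 > 0$. If $\lambda_1 < 0$, one uses that $\sigma \mapsto \sigma'$ is continuous and that, by Gehring's lemma, the supremum defining $\sigma_w$ is not attained; since $\lambda_1 \sigma' + \lambda_2 \to \lambda_1 \sigma_w' + \lambda_2 > 0$ as $\sigma \to \sigma_w^-$, one can fix $\sigma < \sigma_w$ making $\lambda_1 \sigma' + \lambda_2 > 0$. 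With this choice and $r \le R$, the factor $(r/R)^{\lambda_1 + \lambda_2/\sigma'}$ is bounded by $1$, giving the desired constant $C$.

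Finally, the limit assertion is read off the same inequality rather than from the weaker uniform bound. Since the exponent $\lambda_1+\lambda_2/\sigma'$ is strictly positive and $w$ is locally integrable with $w > 0$ a.e. (so that $r^{\lambda_1} w(B(x,r))^{\lambda_2/n}$ is a fixed positive number for fixed $x$ and $r$), letting $R \to \infty$ forces $(r/R)^{\lambda_1+\lambda_2/\sigma'} \to 0$, which can happen only if $R^{\lambda_1} w(B(x,R))^{\lambda_2/n} \to \infty$. The only real subtlety is the case $\lambda_1 < 0$ in selecting the exponent strictly; beyond that the argument is just the reverse Hölder estimate followed by bookkeeping of exponents.
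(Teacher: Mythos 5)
Your argument is correct and is essentially the paper's proof: both apply the reverse H\"older estimate \eqref{ineqRH} to $B(x,r)\subset B(x,R)$, raise it to the power $\lambda_2/n$, and choose $\sigma<\sigma_w$ so that the exponent $\lambda_1+\lambda_2/\sigma'$ is strictly positive, which the hypothesis $\lambda_1\sigma_w'+\lambda_2>0$ permits. Your explicit case distinction ($\lambda_1\ge 0$ versus $\lambda_1<0$) and the deduction of the limit from the positive-exponent decay are just spelled-out versions of what the paper leaves implicit.
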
 
\begin{proof}
From \eqref{ineqRH} we obtain
\[
  \frac{r^{\lambda_1}\ w(B(x,r))^{\lambda_2/n}}{R^{\lambda_1}\ w(B(x,R))^{\lambda_2/n}}\le C \left(\frac{r}{R}\right)^{\lambda_1 + \frac {\lambda_2 }{\sigma'}},
\]
if $w\in RH_\sigma$. The exponent is positive for some $\sigma<\sigma_w$ due to the hypothesis $\lambda_1 {\sigma_w'}+ {\lambda_2 }> 0$. 
\end{proof}

Lemma \ref{key2} below provides an estimate which is used in the proofs of the next sections. To get it we start with another auxiliary result.
\begin{lemma}\label{keylemma}
 Let $w$ a weight. Let $\theta\ge 0$ such that 
\begin{equation}\label{RHtheta}
 \frac{w(B(0, r))}{ w(B(0, R)) }\le c \left(\frac rR\right)^{n\theta}\qquad \text{for all } 0<r<R.
\end{equation}
Assume that $\lambda_2\ge 0$ and $\lambda_1 + \lambda_2 \theta> 0$. Let $\alpha$ be such that 
\begin{equation} \label{bertiz5}
	0\leq\alpha<
	\begin{cases}
	  \frac{n}{n-\lambda_2 }\left(\lambda_1+ \lambda_2 \theta\right), & \quad\lambda_2<n,\\
	  \infty, & \quad\lambda_2\ge n.
  \end{cases}
\end{equation}
For $1\le p<\infty$, and $f\ge 0$ in $\mathcal L^{p,\lambda}(|x|^\alpha w)$ we have
\begin{equation}\label{basicineq2}
\left(\int_{B(0,r)}f^{p} w\right)^{\frac 1{p}}\le C r^{\left(\lambda_1-\alpha\left[1-\frac{\lambda_2}{n}\right]\right)\frac{1}{p}} w(B(0,r))^{\frac{\lambda_2}{n p}} \|f\|_{\mathcal L^{p,\lambda}(|x|^\alpha w)}.
\end{equation}
The constant depends only on $\alpha, \lambda_1,\lambda_2$ and  $p$, but not on $r$.
\end{lemma}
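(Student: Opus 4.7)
The plan is to decompose $B(0,r)$ into dyadic annuli centered at the origin, use the bound $|x|^{-\alpha}\lesssim (2^{-j}r)^{-\alpha}$ on each annulus to pass from the unweighted integral to the integral against $|x|^\alpha w$, and then feed each annulus into the Morrey norm on the enclosing ball $B(0,2^{-j}r)$. The decay from \eqref{RHtheta} together with the crude bound $|x|^\alpha\le (2^{-j}r)^\alpha$ on $B(0,2^{-j}r)$ will give a geometric series whose ratio is controlled exactly by the condition \eqref{bertiz5} on $\alpha$.

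Concretely, I would write $B(0,r)=\bigcup_{j\ge 0}A_j$ with $A_j=B(0,2^{-j}r)\setminus B(0,2^{-j-1}r)$, and since $\alpha\ge 0$ estimate
\[
\int_{B(0,r)}f^p w=\sum_{j\ge 0}\int_{A_j}f^p|x|^{-\alpha}(|x|^\alpha w)\le 2^\alpha r^{-\alpha}\sum_{j\ge 0}2^{j\alpha}\int_{B(0,2^{-j}r)}f^p |x|^\alpha w.
\]
Applying the definition of $\|f\|_{\mathcal L^{p,\lambda}(|x|^\alpha w)}$ on the ball $B(0,2^{-j}r)$ and using $(|x|^\alpha w)(B(0,2^{-j}r))\le (2^{-j}r)^\alpha w(B(0,2^{-j}r))$ together with the hypothesis \eqref{RHtheta} in the form $w(B(0,2^{-j}r))^{\lambda_2/n}\le C\,2^{-j\theta\lambda_2}w(B(0,r))^{\lambda_2/n}$, each summand is bounded by a constant times
\[
(2^{-j}r)^{\lambda_1+\alpha\lambda_2/n}\, 2^{-j\theta\lambda_2}\, w(B(0,r))^{\lambda_2/n}\,\|f\|_{\mathcal L^{p,\lambda}(|x|^\alpha w)}^p.
\]
Collecting the powers of $r$ and factoring out $w(B(0,r))^{\lambda_2/n}$ leaves the sum $\sum_{j\ge 0}2^{-j\gamma}$ with exponent $\gamma=\lambda_1+\theta\lambda_2-\alpha(1-\lambda_2/n)$.

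The one point requiring care is the convergence of this geometric series, which is precisely the quantitative meaning of \eqref{bertiz5}. When $\lambda_2<n$, the condition $\alpha<\frac{n}{n-\lambda_2}(\lambda_1+\lambda_2\theta)$ is equivalent to $\gamma>0$. When $\lambda_2=n$, $\gamma$ reduces to $\lambda_1+\theta\lambda_2$, which is positive by hypothesis. When $\lambda_2>n$, the factor $(1-\lambda_2/n)$ is negative, so $-\alpha(1-\lambda_2/n)\ge 0$ and $\gamma>0$ holds for every $\alpha\ge 0$; hence no upper bound on $\alpha$ is needed, matching the second branch of \eqref{bertiz5}. In all three cases the sum is finite, and taking $p$-th roots on both sides produces exactly the exponents $(\lambda_1-\alpha(1-\lambda_2/n))/p$ on $r$ and $\lambda_2/(np)$ on $w(B(0,r))$ claimed in \eqref{basicineq2}, with a constant depending only on $\alpha,\lambda_1,\lambda_2,p$ (through the ratio of the geometric series and the constants in \eqref{RHtheta}), as required.
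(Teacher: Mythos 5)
Your proof is correct and follows essentially the same route as the paper: dyadic annuli centered at the origin, insertion of the factor $(|x|/2^{-j}r)^{-\alpha}\lesssim 2^{j\alpha}r^{-\alpha}$, the Morrey norm on the enclosing ball together with $|x|^\alpha\le(2^{-j}r)^\alpha$, then \eqref{RHtheta} and a geometric series whose convergence is exactly condition \eqref{bertiz5} (the case analysis in $\lambda_2$ that you spell out is left implicit in the paper). No gaps.
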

\begin{remark}
If $w\in RH_\sigma$, then \eqref{RHtheta} is fulfilled for $\theta=1/\sigma'$. Nevertheless, for $w(x)=|x|^\beta$ a direct computation gives $\theta=1+\beta/n$, which allows a larger range of values of $\alpha$ in \eqref{bertiz5} than the one given by the reverse H\"older exponent of the weight in the case $\beta>0$.

On the other hand, notice that if $\lambda_2=0$ or $\lambda_2\ge n$, then the condition in \eqref{bertiz5} does not depend on $\theta$. 
\end{remark}

\begin{proof}[Proof of Lemma \ref{keylemma}]
Let $A_j=B(0, 2^{-j+1}r)\setminus  B(0, 2^{-j}r)$, $j\in \N$. Then
\begin{align*}
\int_{B(0,r)}f^p w&\le C\sum_{j=1}^\infty \int_{A_j}f(y)^p w(y)\left(\frac{|y|}{2^{-j}r}\right)^\alpha dy
\\&\le C\sum_{j=1}^\infty (2^{-j}r)^{\lambda_1-\alpha} \left(\int_{B(0,2^{-j+1}r)} w(y)|y|^\alpha dy\right)^{\frac{\lambda_2}{n}} \|f\|^p_{\mathcal L^{p,\lambda}(|x|^\alpha w)}
\\&\le C\sum_{j=1}^\infty (2^{-j}r)^{\lambda_1-\alpha\left[1-\frac{\lambda_2}{n}\right]} \ w(B(0,2^{-j+1}r))^{\frac{\lambda_2}{n}} \|f\|^p_{\mathcal L^{p,\lambda}(|x|^\alpha w)}
.
\end{align*}
Using \eqref{RHtheta}, to obtain \eqref{basicineq2} it is sufficient to assume $\lambda_1-\alpha\left[1-\frac{\lambda_2}{n}\right]+\lambda_2\theta>0$. 
\end{proof}

\begin{lemma}\label{key2}
Let $w\in A_p$. (a) Let $B=B(x,r)$ be a ball such that $r\le |x|/2$ and $f$ nonnegative. Then for arbitrary $\alpha$ it holds 
\begin{equation}\label{smallball}
\left(\frac 1{|B|}\int_B f\right)^p\le C r^{\lambda_1} (|x|^\alpha w(B))^{\frac{\lambda_2}n-1}\|f\|_{\mathcal L^{p,\lambda}(|x|^\alpha w)}^p.
\end{equation}
(b) Let $B=B(0,r)$ and $f$ nonnegative. Then for $\lambda_2\ge 0$ and $\alpha$ as in \eqref{bertiz5} it holds
\begin{equation}\label{bigball}
\left(\frac 1{|B|}\int_B f\right)^p\le C r^{\lambda_1+\alpha \left(\frac{\lambda_2}n-1\right)}w(B)^{\frac{\lambda_2}n-1}\|f\|_{\mathcal L^{p,\lambda}(|x|^\alpha w)}^p.
\end{equation}
\end{lemma}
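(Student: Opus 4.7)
The two parts of the lemma call for different strategies because part (a) concerns balls well separated from the origin while part (b) concerns balls centered at the origin. For part (a), the key observation is that when $r\le |x|/2$ every $y\in B(x,r)$ satisfies $|x|/2\le |y|\le 3|x|/2$, so $|y|^\alpha\asymp |x|^\alpha$ for any real $\alpha$ and consequently $(|\cdot|^\alpha w)(B)\asymp |x|^\alpha w(B)$. The plan is to apply H\"older's inequality with respect to the weight $|y|^\alpha w(y)$,
\[\frac 1{|B|}\int_B f\le \frac 1{|B|}\left(\int_B f^p |y|^\alpha w\right)^{1/p}\left(\int_B (|y|^\alpha w)^{1-p'}\right)^{1/p'},\]
bound the first factor using the Morrey norm of $f$ together with the comparison just mentioned, and in the second factor pull $|x|^{\alpha(1-p')}$ outside before invoking the $A_p$ condition on $w$ alone to obtain $(\int_B w^{1-p'})^{1/p'}\le C|B|w(B)^{-1/p}$. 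Tracking exponents, the $|B|$ factors cancel and the remaining $|x|^{\alpha(\lambda_2/n-1)}w(B)^{\lambda_2/n-1}$ coalesces into $(|x|^\alpha w(B))^{\lambda_2/n-1}$, matching \eqref{smallball}.

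For part (b) the pointwise comparison $|y|\asymp |x|$ fails at the origin, so the plan is to bypass the power weight and apply H\"older directly against $w$:
\[\frac 1{|B|}\int_B f\le \frac 1{|B|}\left(\int_B f^p w\right)^{1/p}\left(\int_B w^{1-p'}\right)^{1/p'}.\]
The $A_p$ condition converts the second factor into $Cw(B)^{-1/p}$, and for the first factor I would invoke Lemma \ref{keylemma}: since $w\in A_p\subset A_\infty$, $w$ satisfies \eqref{RHtheta} with some $\theta=1/\sigma'>0$, and the hypothesis that $\alpha$ lies in the range \eqref{bertiz5} supplies exactly what that lemma asks for. This yields $\bigl(\int_B f^p w\bigr)^{1/p}\le Cr^{(\lambda_1-\alpha(1-\lambda_2/n))/p}w(B)^{\lambda_2/(np)}\|f\|_{\mathcal L^{p,\lambda}(|x|^\alpha w)}$, and substituting and raising to the $p$-th power produces \eqref{bigball}.

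No step looks truly difficult; the main delicate point is the exponent bookkeeping in part (a), where one has to verify that the powers of $|x|$ arising from the Morrey estimate on the weighted integral and from the $|x|^{\alpha(1-p')}$ factor in the dual term combine correctly with the $w(B)$ contribution of the $A_p$ condition to yield precisely $(|x|^\alpha w(B))^{\lambda_2/n-1}$. In part (b) one should also confirm that the hypotheses of Lemma \ref{keylemma} are automatically satisfied whenever $w\in A_p$ and $\alpha$ lies in the range \eqref{bertiz5}.
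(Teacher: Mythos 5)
Your proof is correct and follows essentially the same route as the paper: H\"older's inequality plus the $A_p$ condition combined with $|y|\sim|x|$ on $B$ in part (a), and the H\"older/$A_p$ step followed by Lemma \ref{keylemma} (inequality \eqref{basicineq2}) in part (b); your reorganization in (a), applying H\"older directly against $|y|^\alpha w$ and pulling out $|x|^{\alpha(1-p')}$, is only a cosmetic variant of the paper's order of steps. The only point to add is the case $p=1$ (the lemma is later used with $w\in A_1$), where H\"older with exponent $p'$ is unavailable but the needed bound $\bigl(\tfrac1{|B|}\int_B f\bigr)\le C\,\tfrac1{w(B)}\int_B f\,w$ follows immediately from the $A_1$ condition \eqref{A1}.
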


\begin{proof}
For $p>1$ we use H\"older's inequality and  the $A_p$  condition \eqref{defap} to write
\begin{equation*}
\left(\frac 1{|B|}\int_B f\right)^p\le \frac 1{|B|^p}\left(\int_B f^p w\right) w^{1-p'}(B)^{p-1}\le C\, \frac 1{w(B)}\int_B f^p w.
\end{equation*}
For $p=1$ the same upper bound is immediate from \eqref{A1}.

(a) Using $|y|\sim |x|$ for $y\in B$  we have
\begin{equation*}
\frac 1{w(B)}\int_B f^p w \le \frac C{|x|^\alpha w(B)}\int_B f^p|y|^\alpha w
\end{equation*}
and \eqref{smallball} follows from the definition of the norm in $\mathcal L^{p,\lambda}(|x|^\alpha w)$.

(b) In this case estimate \eqref{bigball} holds due to \eqref{basicineq2}.
\end{proof}

\subsection{The case of power weights}

When $w(x)=|x|^\beta$ we have some specific results. In particular, we can relate spaces corresponding to different choices of $\lambda_1$ and $\lambda_2$.
 
For balls of type II using that $|y|\sim |x|$ for $y\in B(x,r)$ we have
\begin{equation*}
w(B(x,r))=\int_{B(x,r)}|y|^\beta dy \sim |x|^\beta r^n.
\end{equation*}

Let us define $N_{II}(f; \mathcal L^{p,\lambda}(|x|^\beta))$ as the supremum in \eqref{normdef} restricted to balls of type II, that is, 
\begin{equation*}
\aligned
N_{II}(f; \mathcal L^{p,\lambda}(|x|^\beta))&= \mathop{\sup_{x\ne 0}}_{{0<r\le |x|/4}} \frac 1{r^{\lambda_1}w(B(x,r))^{\lambda_2/n}}\int_{B(x,r)}|f(y)|^p |y|^\beta dy \\
&\sim \mathop{\sup_{x\ne 0}}_{{0<r\le |x|/4}} \frac 1{r^{\lambda_1+\lambda_2}|x|^{\beta \lambda_2/n}}\int_{B(x,r)}|f(y)|^p |y|^\beta dy. 
\endaligned
\end{equation*}

Observe that if $f$ is the characteristic function of any ball of type II, then $N_{II}(f; \mathcal L^{p,\lambda}(|x|^\beta))<\infty$ without restriction on $\lambda$ and $\beta$. Nevertheless, there are values of $\lambda$ and $\beta$ for which $\|f\|_{\mathcal L^{p,\lambda}(|x|^\beta)}<\infty$ implies that $f$ is identically zero (see Proposition \ref{betaspaces} below).

An immediate consequence of the definition is 
\begin{equation}\label{nbi}
N_{II}(f; \mathcal L^{p,\lambda}(|x|^\beta))\sim N_{II}(f; \mathcal L^{p,(\lambda_1+\mu, \lambda_2-\mu)}(|x|^{\beta(1-\mu/n)})).
\end{equation}
This allows to transfer (part of) one of the exponents to the other and when the reduction of the Morrey norms to balls of type II is possible it gives the equivalence of the norms of different spaces. For example, one gets that a certain Komori-Shirai type space is the same as a certain Samko type space, namely,
\begin{equation}\label{samkomori}
\mathcal L^{p,(0,\mu)}(|x|^\beta)=\mathcal L^{p,(\mu,0)}(|x|^{\beta(1-\mu/n)}).
\end{equation}
More generally, we can state the following proposition.

\begin{proposition}\label{betaspaces}
Let $w(x)=|x|^\beta$ for $\beta>-n$. Then the following holds.
\begin{enumerate}[label=(\alph*)]
\item  If $\lambda_1+\lambda_2(1+\beta/n)>0$ and $0< \lambda_1+\lambda_2< n$, then
\begin{equation*}
\|f\|_{\mathcal L^{p,(\lambda_1,\lambda_2)}(|x|^\beta)}\sim \|f\|_{\mathcal L^{p,(\lambda_1+\lambda_2,0)}(|x|^{\beta(1-\lambda_2/n)})}.
\end{equation*}
\item If $\lambda_1+\lambda_2(1+\beta/n)=0$ and $0\le \lambda_1+\lambda_2< n$, then
\begin{equation*}
\|f\|_{\mathcal L^{p,(\lambda_1,\lambda_2)}(|x|^\beta)}\sim \|f\|_{L^{p}(|x|^\beta)}+ \|f\|_{\mathcal L^{p,(\lambda_1+\lambda_2,0)}(|x|^{\beta+\lambda_1+\lambda_2})}.
\end{equation*}
\item If $\lambda_1+\lambda_2(1+\beta/n)<0$, then $\mathcal L^{p,\lambda}(|x|^\beta)=\{0\}$.
\item If $n=\lambda_1+\lambda_2>-\lambda_2\beta/n$, then 
\begin{equation*}
\mathcal L^{p,\lambda}(|x|^\beta)=\{f: |f|^p |x|^{\beta(1-\lambda_2/n)}\in L^\infty\}.
\end{equation*}
\end{enumerate}
\end{proposition}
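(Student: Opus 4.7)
The plan is to split the supremum defining $\|f\|_{\mathcal L^{p,\lambda}(|x|^\beta)}$ according to the three ball types from Proposition \ref{reduct} and exploit two explicit asymptotics: $r^{\lambda_1}w(B(x,r))^{\lambda_2/n}\sim r^{\lambda_1+\lambda_2}|x|^{\beta\lambda_2/n}$ on type II balls (where additionally $|y|^\beta\sim|x|^\beta$), and $r^{\lambda_1}w(B(0,r))^{\lambda_2/n}\sim r^{\lambda_1+\lambda_2(1+\beta/n)}$ on type I balls. The sign of the exponent $\lambda_1+\lambda_2(1+\beta/n)$ then cleanly separates the four cases.

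For (a) I will apply Remark \ref{remred}: the hypothesis $\lambda_1+\lambda_2(1+\beta/n)>0$ legitimizes restricting to type II balls on the left-hand space, and the hypothesis $\lambda_1+\lambda_2>0$ does the same for the Samko-type space on the right, so both norms equal (up to constants) a single supremum over type II balls; the observation \eqref{nbi} with $\mu=\lambda_2$ then identifies the integrands. For (b) the same type II computation produces the Samko-type contribution with weight exponent $\beta(1-\lambda_2/n)=\beta+\lambda_1+\lambda_2$ (the identity coming from the equality hypothesis). Type I balls now have an $r$-independent denominator, so their contribution is $\sup_{r}\int_{B(0,r)}|f|^p|y|^\beta\,dy=\|f\|_{L^p(|x|^\beta)}^p$. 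Type III balls are absorbed into type I and II via Proposition \ref{reduct}(1), whose hypothesis is that $u(B)=r^{\lambda_1}w(B)^{\lambda_2/n}$ is doubling, which holds because $|x|^\beta$ is.

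For (c) finiteness of the norm gives $\int_{B(0,r)}|f|^p|y|^\beta\,dy\le Cr^{\lambda_1+\lambda_2(1+\beta/n)}\|f\|^p$, and a negative exponent on $r$ combined with monotonicity of the left-hand side in $r$ forces it to vanish for every $r$, so $f\equiv 0$. For (d) the equality $\lambda_1+\lambda_2=n$ collapses the type II ratio to $c|x|^{\beta(1-\lambda_2/n)}|B|^{-1}\int_B|f|^p$, so Lebesgue differentiation as $r\to 0$ yields the inclusion $\mathcal L^{p,\lambda}(|x|^\beta)\subseteq\{|f|^p|x|^{\beta(1-\lambda_2/n)}\in L^\infty\}$; the reverse is obtained by plugging the $L^\infty$ bound into the integrals on each ball type (Proposition \ref{reduct}(1) again handles type III), with the hypothesis $n+\beta\lambda_2/n>0$ being exactly what is needed for the type I estimate $\int_{B(0,r)}|y|^{\beta\lambda_2/n}\,dy\sim r^{n+\beta\lambda_2/n}$ to hold.

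The main obstacle will be the accounting in (b), where the condition from Remark \ref{remred} needed to reduce to type II balls alone fails at the borderline $\lambda_1+\lambda_2(1+\beta/n)=0$ (the geometric sum in the proof of Proposition \ref{reduct}(2) diverges), so type I balls must be kept as a genuine, separate piece of the decomposition and their contribution identified with $\|f\|_{L^p(|x|^\beta)}^p$. A related technicality in (d) is to verify that the stated lower bound $n>-\lambda_2\beta/n$ is precisely what makes $|y|^{\beta\lambda_2/n}$ locally integrable and therefore what balances the denominator on type I balls.
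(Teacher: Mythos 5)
Your proposal is correct and follows essentially the same route as the paper: reduction to type~I/II balls via Proposition~\ref{reduct} and Remark~\ref{remred}, the exponent-transfer identity \eqref{nbi} with $\mu=\lambda_2$ for (a) and the type~II part of (b), the constant (resp.\ vanishing) denominator on balls centered at the origin for the $L^p$ term in (b) (resp.\ triviality in (c)), and the $r\to 0$ differentiation argument plus the direct computation of $\int_B|y|^{\beta\lambda_2/n}$ under $\beta\lambda_2>-n^2$ for (d). No substantive differences from the paper's proof.
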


\begin{proof}
In case (a), the reduction to balls of type II is possible for the involved spaces, as mentioned in Remark \ref{remred}. Therefore, the result follows from \eqref{nbi}.
  
In case (b) we cannot use the reduction to balls of type II for $\mathcal L^{p,(\lambda_1,\lambda_2)}(|x|^\beta)$, and we need to take into account also the balls centered at the origin. But 
\begin{equation*}
r^{\lambda_1} w(B(0,r))^{\lambda_2/n}=c\, r^{\lambda_1+\lambda_2(1+\beta/n)}=c,
\end{equation*}
and the supremum in $r$ over these balls gives a multiple of $\|f\|_{\mathcal L^{p}(|x|^\beta)}$. For balls of type II we obtain the same norm as in part (a) and this is the reason of the last term in (b).

In case (c),  $\lim_{r\to\infty} r^{\lambda_1} w(B(0,r))^{\lambda_2/n}=0$ and the supremum on balls centered at the origin is finite only if the function is identically zero.

The result in (d) corresponds to the power weighted version of Proposition \ref{special}. The proof is the same, only the range of $\lambda_2$ is larger here. We needed $\lambda_2\le n$ to use H\"older's inequality in Proposition \ref{special}, but we can use direct calculation if $w(x)=|x|^\beta$. The condition $\beta \lambda_2>-n^2$ appears beacuse it is needed for the integrability of $|x|^{\beta \lambda_2/n}$.
\end{proof}

The condition $\lambda_1+\lambda_2\in (0,n)$ in (a) and (b) is needed to ensure that the norm of the spaces $\mathcal L^{p,(\lambda_1+\lambda_2,0)}(|x|^{\gamma})$ can be restricted to balls of type II. If $\lambda_1+\lambda_2<0$, we have $\mathcal L^{p,(\lambda_1+\lambda_2,0)}(|x|^{\gamma})=\{0\}$, but  $\mathcal L^{p,(\lambda_1,\lambda_2)}(|x|^{\beta})$ is not trivial: the characteristic functions of balls of type II belong to the space, for example.

We can also give better versions of Lemmas \ref{keylemma} and \ref{key2} in the sense that the range of $\lambda_1$ and $\lambda_2$ to which they apply is larger.
\begin{lemma}\label{keypw}
 Let $1\le p<\infty$, $\beta>-n$, and $\lambda_1 + \lambda_2(1+\beta/n)>0$. Let $f\ge 0$ in $\mathcal L^{p,\lambda}(|x|^{\beta})$ with $\beta>-n$ and $0\le\alpha<\lambda_1+ \lambda_2(1+\frac{\beta}{n})$.
For $r>0$ it holds that
 \begin{equation}\label{basicineq3}
\left(\int_{B(0,r)}f^{p} |y|^{\beta-\alpha} \right)^{\frac 1{p}}\le C \ r^{\left(\lambda_1-\alpha+\lambda_2\left[1+\frac{\beta}{n}\right]\right)\frac{1}{p}}  \|f\|_{\mathcal L^{p,\lambda}(|x|^{\beta})}.
\end{equation}
The constant depends only on the involved parameters, but not on $r$.
\end{lemma}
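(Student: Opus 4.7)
The plan is to mimic the dyadic annular decomposition used in the proof of Lemma \ref{keylemma}, but to replace the abstract reverse H\"older estimate \eqref{RHtheta} by the explicit computation $w(B(0,R))=c\,R^{n+\beta}$ available for $w(x)=|x|^\beta$. This is exactly the improvement flagged in the remark after Lemma \ref{keylemma}: for $\beta>0$ the effective value of $\theta$ is $1+\beta/n$, which exceeds $1/\sigma_w'=1$, so it is natural to redo the proof by hand rather than invoking Lemma \ref{keylemma} directly.

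First I decompose $B(0,r)\setminus\{0\}$ into the dyadic annuli $A_j=B(0,2^{-j+1}r)\setminus B(0,2^{-j}r)$ for $j\geq 1$. On each $A_j$ one has $|y|\sim 2^{-j}r$, so writing $|y|^{\beta-\alpha}=|y|^\beta\,|y|^{-\alpha}$ I can pull the factor $(2^{-j}r)^{-\alpha}$ out of the integral over $A_j$ (the sign of $\alpha$ is irrelevant, and here $\alpha\geq 0$), obtaining
\begin{equation*}
\int_{B(0,r)} f^p |y|^{\beta-\alpha}\,dy \;\leq\; C\sum_{j=1}^\infty (2^{-j}r)^{-\alpha}\int_{B(0,2^{-j+1}r)} f^p(y)\,|y|^\beta\,dy.
\end{equation*}
Each integral on the right is then estimated by the definition of the Morrey norm, giving an upper bound proportional to $(2^{-j+1}r)^{\lambda_1}\,w(B(0,2^{-j+1}r))^{\lambda_2/n}\,\|f\|_{\mathcal L^{p,\lambda}(|x|^\beta)}^p$. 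Using $w(B(0,R))\sim R^{n+\beta}$ this becomes a constant multiple of $(2^{-j}r)^{\lambda_1+\lambda_2(1+\beta/n)}\|f\|_{\mathcal L^{p,\lambda}(|x|^\beta)}^p$.

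Substituting back, the sum collapses to a geometric series in $2^{-j}$ with exponent $\lambda_1-\alpha+\lambda_2(1+\beta/n)$, which is positive by the hypothesis $\alpha<\lambda_1+\lambda_2(1+\beta/n)$. Hence the series converges to a constant times $r^{\lambda_1-\alpha+\lambda_2(1+\beta/n)}\|f\|_{\mathcal L^{p,\lambda}(|x|^\beta)}^p$, and taking $p$-th roots produces \eqref{basicineq3}. There is essentially no obstacle in this argument: the only subtle step is verifying that the geometric sum converges, and that is precisely the quantitative content of the hypothesis on $\alpha$; the whole point of the lemma is to widen the admissible range of $\alpha$ in the case $\beta>0$ by exploiting the exact size of balls centered at the origin instead of a reverse H\"older inequality.
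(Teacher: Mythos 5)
Your argument is correct and is essentially the paper's own proof: the paper only sketches it as ``similar to that of Lemma \ref{keylemma} with $w(x)=|x|^{\beta-\alpha}$'', using that $\int_{B(0,2^{-j+1}r)}|y|^{\beta}\,dy\sim C(2^{-j}r)^{n+\beta}$, and your dyadic-annulus computation with the factor $(2^{-j}r)^{-\alpha}$ pulled out and the geometric series governed by $\lambda_1-\alpha+\lambda_2(1+\beta/n)>0$ is exactly that argument written out in full. No gaps to report.
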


The proof is similar to that of Lemma \ref{keylemma} with $w(x)=|x|^{\beta-\alpha}$ and we use that the integral of $w(y)|y|^{\alpha}=|y|^{\beta}$ over $B(0,2^{-j+1}r)$ is like $C(2^{-j}r)^{\beta+n}$.

\begin{lemma}\label{key2pw}
Let $1\le p<\infty$. (a) Let $B=B(x,r)$ be a ball such that $r\le |x|/2$ and $f$ nonnegative. Then for arbitrary $\beta$ it holds 
\begin{equation}\label{smallballpw}
\left(\frac 1{|B|}\int_B f\right)^p\le C r^{\lambda_1+\lambda_2-n}|x|^{-\beta(1-\lambda_2/n)}\|f\|_{\mathcal L^{p,\lambda}(|x|^\beta)}^p.
\end{equation}
(b) Let $B=B(0,r)$,  $f$ nonnegative, $\beta>-n$, and $\lambda_1 + \lambda_2(1+\beta/n)>0$. Then for
 \[
\beta\left(1-\frac {\lambda_2}n\right)<\lambda_1+\lambda_2 + n(p-1)
\]  it holds
\begin{equation}\label{bigballpw}
\left(\frac 1{|B|}\int_B f\right)^p\le C r^{\lambda_1+\lambda_2-n-\beta(1-\lambda_2/n)}\|f\|_{\mathcal L^{p,\lambda}(|x|^\beta)}^p.
\end{equation}
\end{lemma}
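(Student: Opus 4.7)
The plan is to handle the two parts separately, exploiting in each case the fact that $|y|^\beta$ is either essentially constant on the ball (part (a)) or amenable to an explicit radial integration (part (b)). Part (a) needs no $A_p$ hypothesis because the assumption $r\le |x|/2$ forces $|y|\sim|x|$ on $B$, so the weight plays no real role; part (b) will rely on H\"older's inequality combined with Lemma \ref{keypw}.

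For part (a), when $p>1$ I would write H\"older's inequality
\begin{equation*}
\frac{1}{|B|}\int_B f \le \Bigl(\frac{1}{|B|}\int_B f^p|y|^\beta\Bigr)^{1/p}\Bigl(\frac{1}{|B|}\int_B|y|^{-\beta/(p-1)}\Bigr)^{(p-1)/p},
\end{equation*}
evaluate the second factor as $\sim|x|^{-\beta/p}$ using $|y|\sim|x|$, and bound the first factor using the Morrey norm of $f$. Since $w(B)=\int_B|y|^\beta\,dy\sim|x|^\beta r^n$ on type II balls, substituting yields \eqref{smallballpw} with the correct exponents $r^{\lambda_1+\lambda_2-n}|x|^{-\beta(1-\lambda_2/n)}$. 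The case $p=1$ is handled by the pointwise replacement $|y|^\beta\sim|x|^\beta$ directly, bypassing H\"older.

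For part (b), I would introduce an auxiliary parameter $\alpha$ and split by H\"older
\begin{equation*}
\int_B f \le \Bigl(\int_B f^p|y|^{\beta-\alpha}\Bigr)^{1/p}\Bigl(\int_B|y|^{-(\beta-\alpha)/(p-1)}\Bigr)^{(p-1)/p}.
\end{equation*}
Lemma \ref{keypw} controls the first factor by $C r^{\lambda_1-\alpha+\lambda_2(1+\beta/n)}\|f\|_{\mathcal L^{p,\lambda}(|x|^\beta)}^p$ (inside the $1/p$ power), while the second factor is an explicit radial integral giving $c r^{n-(\beta-\alpha)/(p-1)}$ (inside the $(p-1)/p$ power). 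Raising to the $p$th power and multiplying, the exponents combine to $\lambda_1+\lambda_2+\lambda_2\beta/n+(p-1)n-\beta$, after which division by $|B|^p\sim r^{pn}$ makes the $\alpha$-dependence cancel and produces precisely the stated exponent $\lambda_1+\lambda_2-n-\beta(1-\lambda_2/n)$. For $p=1$ one substitutes the pointwise bound $|y|^{\alpha-\beta}\le r^{\alpha-\beta}$ (valid for $\alpha\ge\beta$) in place of H\"older and proceeds identically.

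The main obstacle is choosing $\alpha$ compatibly: Lemma \ref{keypw} requires $0\le\alpha<\lambda_1+\lambda_2(1+\beta/n)$, while convergence at the origin of the radial integral requires $\alpha>\beta-n(p-1)$ (or $\alpha\ge\beta$ when $p=1$). A valid $\alpha$ exists exactly when $\beta-n(p-1)<\lambda_1+\lambda_2(1+\beta/n)$, which rearranges to the hypothesis $\beta(1-\lambda_2/n)<\lambda_1+\lambda_2+n(p-1)$ imposed in the statement; the assumption $\lambda_1+\lambda_2(1+\beta/n)>0$ takes care of the lower bound $\alpha\ge 0$. Once $\alpha$ is fixed in this nonempty window, the remaining computation is routine bookkeeping of exponents.
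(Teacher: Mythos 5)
Your proposal is correct and follows essentially the same route as the paper: part (a) exploits $|y|\sim|x|$ on the ball (the paper just uses Jensen plus the pointwise comparison $|y|^\beta\sim|x|^\beta$, which is a cosmetic simplification of your weighted H\"older step), and part (b) is exactly the paper's argument — H\"older with the auxiliary weight $|y|^{\beta-\alpha}$, the explicit radial integral, and Lemma \ref{keypw}, with the admissible window for $\alpha$ ($\alpha>\beta-n(p-1)$, or $\alpha\ge\beta$ when $p=1$, together with $0\le\alpha<\lambda_1+\lambda_2(1+\beta/n)$) yielding precisely the stated restriction on $\beta$. The exponent bookkeeping you describe matches the paper's.
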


\begin{proof}
(a) We have 
\begin{align*}
\left(\frac 1{|B|}\int_B f\right)^p \le \frac 1{|B|}\int_B f^p\le C \frac {|x|^{-\beta}}{|B|}\int_B f^p |y|^{\beta}
\end{align*}
and \eqref{smallballpw} follows.

(b) We have 
\begin{align*}
\left(\frac 1{|B|}\int_B f\right)^p & \le \frac 1{|B|}\int_B f^p |y|^{\beta-\alpha} \left(\frac 1{|B|}\int_B  |y|^{(\alpha-\beta)(p'-1)}dy\right)^{p-1} \\
 &\le C r^{\alpha-\beta -n} \int_B f^p |y|^{\beta-\alpha}.
\end{align*}
and \eqref{bigballpw} follows from \eqref{basicineq3} if some conditions are fulfilled. First we need $\alpha > \beta - n(p-1)$ for the integrability of $|y|^{(\alpha-\beta)(p'-1)}$. Moreover, we need $\alpha$ satisfying the conditions of Lemma \ref{keypw}. Both conditions on $\alpha$ are compatible if 
\begin{equation*}
\beta - n(p-1) < \lambda_1+ \lambda_2(1+\frac{\beta}{n})\,,
\end{equation*}
which gives the condition for $\beta$ required in the statement . 
\end{proof}

\section{The Hardy-Littlewood maximal operator}\label{hiru}

In this section we deal with the Hardy-Littlewood maximal operator. First we obtain a necessary condition in the case on nonnegative $\lambda_1$  and $\lambda_2$. It shows that $w$ must belong to a certain Muckenhoupt class of weights.
\begin{proposition}\label{necrange0}
Let $1\le p<\infty$, $\lambda_1\ge 0$, $\lambda_2\ge 0$, and $0<\lambda_1 + \lambda_2< n$. If $M$ is bounded from $\mathcal L^{p,\lambda}(w)$ to $W\mathcal L^{p,\lambda}(w)$, then $w\in A_{\frac{np+\lambda_1}{n-\lambda_2}}$. 
\end{proposition}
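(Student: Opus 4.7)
The plan is to adapt the classical necessity proof for $M\colon L^p(w)\to L^{p,\infty}(w)\Rightarrow w\in A_p$, substituting Morrey norms in place of Lebesgue ones. Fix an arbitrary ball $B=B(x_0,r_0)$ together with a concentric subball $B_1=B(x_0,\rho)$ with $\rho\le r_0$. Because $B_1\subset B(y,2r_0)$ for every $y\in B$, the standard pointwise estimate $M\chi_{B_1}(y)\ge |B_1|/(2^n|B|)$ holds throughout $B$, whence $B\subset \{M\chi_{B_1}>t\}$ for $t=c|B_1|/|B|$ with a universal $c>0$. Picking this specific ball $B$ and level $t$ in the definition of the weak Morrey norm and invoking the hypothesis gives
\[
\left(\frac{|B_1|}{|B|}\right)^p\frac{w(B)^{1-\lambda_2/n}}{|B|^{\lambda_1/n}}\le C\,\|\chi_{B_1}\|^p_{\mathcal L^{p,\lambda}(w)}.
\]

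The crucial second step is to prove $\|\chi_{B_1}\|^p_{\mathcal L^{p,\lambda}(w)}\le C\,w(B_1)^{1-\lambda_2/n}/|B_1|^{\lambda_1/n}$. The supremum defining the Morrey norm is analyzed by splitting according to the relative position of the test ball $B'$ and $B_1$. For $B'\supset B_1$ one has $w(B_1\cap B')=w(B_1)$, and since $\lambda_1,\lambda_2\ge 0$ the denominator $|B'|^{\lambda_1/n}w(B')^{\lambda_2/n}$ is non-decreasing in $B'$; thus this part of the sup equals the target expression exactly, attained at $B'=B_1$. For $B'\subsetneq B_1$ one has $w(B_1\cap B')=w(B')$, so the contribution reduces to $w(B')^{1-\lambda_2/n}/|B'|^{\lambda_1/n}$, which must be dominated by the value at $B_1$; this is the main technical obstacle, requiring the strict inequality $\lambda_1+\lambda_2<n$ together with some doubling/reverse-H\"older regularity of $w$ (itself derivable as a preliminary consequence of the hypothesis). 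The remaining case of $B'$ overlapping $B_1$ without containment is reduced to the previous two via a short covering argument.

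Combining the two estimates yields $(|B_1|/|B|)^{p+\lambda_1/n}\le C(w(B_1)/w(B))^{1-\lambda_2/n}$. Raising both sides to the power $n/(n-\lambda_2)$ produces
\[
\left(\frac{|B_1|}{|B|}\right)^{q}\le C\,\frac{w(B_1)}{w(B)},\qquad q=\frac{np+\lambda_1}{n-\lambda_2},
\]
which is precisely the $A_q$ characterization restricted to concentric pairs of balls. A standard covering/translation argument then upgrades this to the full $A_q$ condition on all measurable $E\subset B$, concluding $w\in A_q$.
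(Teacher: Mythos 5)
Your first step (testing the weak-type bound on $f=\chi_{B_1}$ over the ball $B$) is fine, but the argument breaks at two later points, and the second is fatal. First, the bound $\|\chi_{B_1}\|_{\mathcal L^{p,\lambda}(w)}^p\le C\,w(B_1)^{1-\lambda_2/n}/|B_1|^{\lambda_1/n}$ is not available a priori when $\lambda_1>0$: for test balls $B'\subset B_1$ you need $\bigl(w(B')/w(B_1)\bigr)^{1-\lambda_2/n}\le C\bigl(|B'|/|B_1|\bigr)^{\lambda_1/n}$, i.e.\ a quantitative reverse H\"older/$A_\infty$ estimate of the type \eqref{ineqRH} for $w$. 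You acknowledge this but dismiss it as ``derivable as a preliminary consequence of the hypothesis'' without giving any argument; that regularity is precisely the kind of information the proposition is meant to establish (membership in $A_q\subset A_\infty$), so as written the step is circular, and for a general weight $\chi_{B_1}$ need not even belong to $\mathcal L^{p,\lambda}(w)$ when $\lambda_1>0$.

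Second, and decisively: even granting both steps, testing only on characteristic functions cannot yield the full $A_q$ condition. What you obtain is $\bigl(|B_1|/|B|\bigr)^q\le C\,w(B_1)/w(B)$ for concentric balls, and there is no ``standard covering/translation argument'' upgrading this to \eqref{defap}: even the stronger statement for all measurable $E\subset B$ is strictly weaker than $A_q$. For instance, $w(x)=|x|^{n(q-1)}$ satisfies $w(E)/w(B)\ge c\,(|E|/|B|)^{q}$ for every measurable $E\subset B$, yet $w^{1-q'}=|x|^{-n}$ is not locally integrable, so $w\notin A_q$. This is exactly why the classical necessity proof for the maximal operator tests on (a regularization of) $w^{1-q'}\chi_B$ rather than on $\chi_{B_1}$, and the paper does the same here: it takes $f=\sigma_\epsilon\chi_B$ with $\sigma_\epsilon=(w+\epsilon)^{-\frac{n-\lambda_2}{n(p-1)+\lambda_1+\lambda_2}}$ and, crucially, controls $\|\sigma_\epsilon\chi_B\|_{\mathcal L^{p,\lambda}(w)}$ not by a direct computation of the Morrey norm (which would again require regularity of $w$) but through the embedding of Lemma \ref{emb0.5}, whose constant is independent of $w$; with that choice of $\sigma_\epsilon$ the Lebesgue norm collapses to a power of $\sigma_\epsilon(B)$ and, letting $t\to\sigma_\epsilon(B)/|B|$ and $\epsilon\to0$, the resulting inequality is exactly the $A_{\frac{np+\lambda_1}{n-\lambda_2}}$ condition, dual weight included. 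To repair your proof you would need to replace $\chi_{B_1}$ by such a dual-weight test function and supply a $w$-uniform substitute for your second step, which is precisely what Lemma \ref{emb0.5} provides.
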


\begin{proof}
Let $B$ be a ball of radius $r$. Define $f=\sigma \chi_B$ with $\sigma$ nonnegative to be chosen later. For $x\in B$, we have $Mf(x)\ge \sigma(B)/|B|$. If $t<\sigma(B)/|B|$, then $B= \{x\in B: Mf(x)>t\}$. Assuming that $M$ is bounded from $\mathcal L^{p,\lambda}(w)$ to $W\mathcal L^{p,\lambda}(w)$ we have 
\begin{equation*}
\aligned
& \left(\frac{t^p w(B)}{|B|^{\lambda_1/n}\ w(B)^{\lambda_2/n}}\right)^{1/p}\le C \|\sigma \chi_B\|_{\mathcal L^{p,\lambda}(w)}
\\ & \le  C  \|\sigma \chi_B\|_{L^{\frac {pn}{n-\lambda_1-\lambda_2}}(w^{\frac{n-\lambda_2}{n-\lambda_1-\lambda_2}})}
=C \left(\int_B \sigma^{\frac{pn}{n-\lambda_1-\lambda_2}}w^{\frac{n-\lambda_2}{n-\lambda_1-\lambda_2}}\right)^{\frac {n-\lambda_1-\lambda_2}{pn}},
\endaligned
\end{equation*}
where we used Lemma \ref{emb0.5} in the second  inequality. 
Let $t$ tend to $\sigma(B)/|B|$ and choose $\sigma$ such that $\sigma=\sigma^{\frac{pn}{n-\lambda_1-\lambda_2}}w^{\frac{n-\lambda_2}{n-\lambda_1-\lambda_2}}$, that is, $\sigma=w^{-\frac{n-\lambda_2}{n(p-1)+\lambda_1+\lambda_2}}$. We get 
\begin{equation*}
\frac{w(B)^{1-\frac {\lambda_2} {n}} \sigma(B)^{p-1+\frac {\lambda_1+\lambda_2}{n}}}{|B|^{p+\frac {\lambda_1} {n}}}\le C,
\end{equation*}
with a constant independent of $B$. Therefore, $w\in A_{\frac{np+\lambda_1}{n-\lambda_2}}$. 

Actually, we do not know a priori that $\sigma(B)$ is finite. To get around this problem, we define $\sigma_\epsilon=(w+\epsilon)^{-\frac{n-\lambda_2}{pn-n+\lambda_1+\lambda_2}}$ for $\epsilon>0$ and let $\epsilon$ tend to $0$.
\end{proof}

In this proof we are restricted to the case of nonnegative $\lambda_1$ and $\lambda_2$. The result below for the case $\lambda_1<0$ and $\lambda_2=n$ and power weights shows that the boundedness holds for the weight $|x|^\beta$ and all positive $\beta$. Hence, there is  no possibility to find a $q=q(\lambda_1, \lambda_2,p)$ for which $w\in A_q$ in such case. An interesting question for which we do not have an answer is whether the weight has to be in $A_\infty$ in all the admissible cases, and in particular when $\lambda_2=n$, which corresponds to one of the types of weighted spaces highlighted in the introduction.

Applied to power weights, $w(x)=|x|^\beta$, the condition $w\in A_{\frac{np+\lambda_1}{n-\lambda_2}}$ of Proposition \ref{necrange0} is the same as saying
\begin{equation*}
-n< \beta<\frac{n}{n-\lambda_2}\left[\lambda_1+\lambda_2 + n(p-1)\right].
\end{equation*}
Nevertheless, for these weights we can restrict the bounds for the exponent $\beta$ and even extend the result to some negative values of $\lambda_1$ or $\lambda_2$. This is the content of the next result. 

\begin{proposition}\label{necrangepower2}
Let $1\le p<\infty$, $\beta>-n$,  
and $-\lambda_2 \beta/n< \lambda_1 + \lambda_2 \le n$. 
If $M$ is bounded from $\mathcal L^{p,\lambda}(|x|^\beta)$ to $W\mathcal L^{p,\lambda}(|x|^\beta)$, then 
\begin{equation} \label{rangepowerw1}
\lambda_1+\lambda_2 -n  \le \beta \left(1-\frac {\lambda_2} n\right)<\lambda_1+\lambda_2 + n(p-1).
\end{equation}
\end{proposition}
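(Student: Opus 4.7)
The plan is to test $M$ on the characteristic function of a type II ball. Take $f=\chi_{B(x_0,r_0)}$ with $r_0=|x_0|/4$ and $|x_0|>0$ free. Under the hypothesis $-\lambda_2\beta/n<\lambda_1+\lambda_2$, Remark~\ref{remred} ensures that the supremum in $\|f\|_{\mathcal L^{p,\lambda}(|x|^\beta)}$ may be restricted to balls of type II, and a direct computation (using $w(B(x_0,r_0))\sim |x_0|^\beta r_0^n$) shows $\|f\|_{\mathcal L^{p,\lambda}(|x|^\beta)}^p\sim |x_0|^{(n+\beta)(1-\lambda_2/n)-\lambda_1}$, which is finite. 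On the other hand, whenever $|y|\le |x_0|$ the ball $B(y,3|x_0|)$ contains $B(x_0,r_0)$, so
\begin{equation*}
Mf(y)\ge \frac{|B(x_0,r_0)|}{|B(y,3|x_0|)|}\ge c_n>0, \qquad y\in B(0,|x_0|).
\end{equation*}
Plugging this into the weak Morrey norm of $Mf$ on the ball $B(0,r)$ with $r\le |x_0|$ and level $t=c_n/2$ yields
\begin{equation*}
\|Mf\|_{W\mathcal L^{p,\lambda}(|x|^\beta)}^p \gtrsim r^{(n+\beta)(1-\lambda_2/n)-\lambda_1}.
\end{equation*}
If the lower bound in \eqref{rangepowerw1} failed, i.e., $(n+\beta)(1-\lambda_2/n)-\lambda_1<0$, then letting $r\to 0^+$ would give $\|Mf\|_W=\infty$ while $\|f\|_{\mathcal L}<\infty$, contradicting the boundedness of $M$.

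\textbf{Upper bound.} Here I would reduce to Proposition~\ref{necrange0}. Assume first $0<\lambda_1+\lambda_2<n$: Proposition~\ref{betaspaces}(a) then identifies $\mathcal L^{p,(\lambda_1,\lambda_2)}(|x|^\beta)$ with the Samko-type space $\mathcal L^{p,(\lambda_1+\lambda_2,0)}(|x|^{\beta(1-\lambda_2/n)})$, with equivalent norms. The boundedness of $M$ therefore transfers to this Samko space, whose parameters $\lambda_1'=\lambda_1+\lambda_2\ge 0$ and $\lambda_2'=0$ satisfy the hypotheses of Proposition~\ref{necrange0}. Applying that result forces $|x|^{\beta(1-\lambda_2/n)}\in A_{(np+\lambda_1+\lambda_2)/n}$; for power weights this is equivalent to the strict inequality
\begin{equation*}
-n<\beta(1-\lambda_2/n)<n\left(\tfrac{np+\lambda_1+\lambda_2}{n}-1\right)=n(p-1)+\lambda_1+\lambda_2,
\end{equation*}
which is exactly the upper bound in \eqref{rangepowerw1}. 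Strictness comes as usual from the integrability at the origin of the dual weight $|x|^{-\beta/(q-1)}$ built in the test function $f_\epsilon=\sigma_\epsilon\chi_{B(0,r)}$ of Proposition~\ref{necrange0} (after passing to the limit $\epsilon\to 0^+$). For the boundary case $\lambda_1+\lambda_2=n$ one argues via Proposition~\ref{betaspaces}(d) which identifies the Morrey space with an $L^\infty$-type space; a direct computation with $f=\chi_{B(0,R_0)}$ shows that $Mf$ fails to belong to the same space precisely when the upper bound in \eqref{rangepowerw1} is violated. Finally, for the remaining configurations $\lambda_1+\lambda_2\le 0$ (which the hypothesis forces to have $\beta$ and $\lambda_2$ of the same sign), the test $f=\chi_{B(0,R_0)}$ combined with the asymptotics $Mf(y)\sim R_0^n|y|^{-n}$ for $|y|>R_0$, tested against the weak norm on the expanding balls $B(0,R)$ with $R\to\infty$, gives the non-strict form of the upper bound; strictness is again obtained by the approximation scheme $\sigma_\epsilon\to|x|^{-\beta/(q-1)}$ as above.

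\textbf{Main obstacle.} The delicate point is handling all admissible sign combinations of $\lambda_1$ and $\lambda_2$ in a uniform way: Lemma~\ref{emb0.5} and Proposition~\ref{betaspaces}(a) only cover the range $0<\lambda_1+\lambda_2<n$, whereas the hypothesis of the proposition allows both the boundary $\lambda_1+\lambda_2=n$ and negative values $\lambda_1+\lambda_2\le 0$ (when $\beta\lambda_2>0$). A secondary subtlety is obtaining the \emph{strict} upper inequality rather than a non-strict one: this requires the $\epsilon$-approximation argument and the interpretation that integrability of $|x|^{-\beta/(q-1)}$ near the origin is exactly what is being forced.
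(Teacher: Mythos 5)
Your lower-bound argument is correct and is essentially the paper's: test on the characteristic function of a type II ball, note $Mf\gtrsim c$ on a neighbourhood of the origin, and let the radius of balls centred at $0$ tend to zero in the weak norm. Your treatment of the upper bound in the range $0<\lambda_1+\lambda_2<n$ (transfer to the Samko-type space via Proposition~\ref{betaspaces}(a) and then invoke Proposition~\ref{necrange0}, reading off the strict inequality from the $A_q$ condition for power weights) is a genuinely different and attractive route; note, however, that to transfer the hypothesis you also need the \emph{weak}-space analogue of Proposition~\ref{betaspaces}(a), which is not stated in the paper (it does follow by the same type-II computation, using the remark that Proposition~\ref{reduct} holds for weak norms), so this step should be made explicit.

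The genuine gap is in the remaining cases $\lambda_1+\lambda_2=n$ and $\lambda_1+\lambda_2\le 0$, and specifically at the endpoint $\beta\bigl(1-\tfrac{\lambda_2}{n}\bigr)=\lambda_1+\lambda_2+n(p-1)$. Your tests with $f=\chi_{B(0,R_0)}$ only detect strict violation: e.g.\ when $\lambda_1+\lambda_2=n$ one has $|Mf|^p|y|^{\gamma}\sim R_0^{np}|y|^{\gamma-np}$ at infinity with $\gamma=\beta(1-\lambda_2/n)$, which is bounded precisely at $\gamma=np$, so the claim that failure occurs ``precisely when the upper bound is violated'' is false at equality (and, in addition, excluding $Mf$ from the strong $L^\infty$-type space of Proposition~\ref{betaspaces}(d) does not contradict the hypothesis, which only asserts boundedness into the \emph{weak} space). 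The appeal to ``the approximation scheme $\sigma_\epsilon\to|x|^{-\beta/(q-1)}$ as above'' cannot rescue strictness here, because that scheme lives inside the proof of Proposition~\ref{necrange0}, whose hypotheses ($\lambda_1,\lambda_2\ge 0$, $\lambda_1+\lambda_2<n$) and whose key ingredient, the embedding of Lemma~\ref{emb0.5}, are unavailable in exactly these configurations. The paper closes this endpoint uniformly with a different test function: $f(x)=|x|^{-n}\chi_{B(0,1)}$, which (for $\lambda_1+\lambda_2\le n$ and under the standing hypothesis $\lambda_1+\lambda_2(1+\beta/n)>0$) belongs to $\mathcal L^{p,\lambda}(|x|^\beta)$ whenever $\beta(1-\lambda_2/n)\ge\lambda_1+\lambda_2+n(p-1)$, yet is not locally integrable, so $Mf\equiv\infty\notin W\mathcal L^{p,\lambda}(|x|^\beta)$; this covers the equality case and all sign configurations of $\lambda_1,\lambda_2$ at once. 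Without an argument of this kind your proof establishes only the non-strict upper inequality outside the range $0<\lambda_1+\lambda_2<n$.
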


\begin{proof}
First we look for the condition ensuring that the characteristic function of the unit ball centered at the origin is in $W\mathcal L^{p,\lambda}(|x|^\beta)$. 

Take the ball $B=B(x,r)$ with $r\le |x|/4$. We can assume $|x|<3/2$, otherwise $B$ does not intersect the unit ball. Since $|y|\sim |x|$ for $y\in B$, we need 
\begin{equation*}
\frac {|x|^\beta r^n}{r^{\lambda_1} (|x|^\beta r^n)^{\lambda_2/n}}\le C,
\end{equation*}
for $0<r<|x|/4$. The exponent of $r$ is nonnegative if $\lambda_1+\lambda_2\le n$. When this is satisfied the supremum in $r$ is attained at $r=|x|/4$. Then we require the exponent of $|x|$ to be nonnegative, that is, 
\begin{equation*}
n-\lambda_1-\lambda_2+\beta\left(1-\frac {\lambda_2}n\right)\ge 0.
\end{equation*}

Since $\lambda_1 + \lambda_2 (1+\beta/n)>0$ is the condition for the reduction to balls of type II, the characteristic function of the ball centered at $2$ with radius $1/2$ is in $\mathcal L^{p,\lambda}(|x|^\beta)$. Let $f$ be such function. The maximal operator acting on $f$ satisfies $Mf(x)\ge c$ for some $c>0$ and all $x\in B(0,1)$. Consequently, $Mf\notin W \mathcal L^{p,\lambda}(|x|^\beta)$ for $\beta(n-\lambda_2)< n \left[\lambda_1+\lambda_2 -n\right]$ whenever $\lambda_2<n$. 

The right-hand side condition of \eqref{rangepowerw1} under the assumptions of Proposition \ref{necrange0} has been already proved. But here we give a direct proof valid for more general $\lambda_1$ and $\lambda_2$. 
For 
\begin{align} \label{bertiz3}
\beta \left(1-\frac {\lambda_2}n\right) \ge n(p-1)+\lambda_1+\lambda_2, 
\end{align}
the function $|x|^{-n}\chi_{B(0,1)}$ is in $\mathcal L^{p,\lambda}(|x|^\beta)$ and is not locally integrable. Let $B=B(x,r)$ with $r\le |x|/4$. Since $\int_B |y|^{-np} |y|^\beta dy\sim |x|^{\beta-np} r^n$, we need 
\begin{equation*}
\frac {|x|^{\beta-np} r^n}{r^{\lambda_1} (|x|^\beta r^n)^{\lambda_2/n}}\le C.
\end{equation*}
The exponent of $r$ is nonnegative for  $\lambda_1+\lambda_2\le n$, and assuming this condition and replacing $r$ with $|x|/4$, the resulting exponent of $x$ is nonnegative if \eqref{bertiz3} holds.

Notice that \eqref{bertiz3} together with the assumption $\lambda_1 + \lambda_2+\lambda_2 \beta/n>0$ implies $\beta>n(p-1)$. This condition is natural because otherwise $|x|^{-n}\chi_{B(0,1)}\notin L^p(B(0,\delta); |x|^\beta)$.
\end{proof}

\begin{remark}
The condition $\lambda_1 + \lambda_2 (1+\beta/n)>0$ also imposes a bound to $\beta$, but this is of different nature. As can be seen in Proposition \ref{betaspaces}, when $\lambda_1 + \lambda_2 (1+\beta/n)<0$ the Morrey space is trivial.

Condition \eqref{rangepowerw1} for $\lambda_2=n$  is the same as $-np<\lambda_1\le 0$.
\end{remark}

We study now the boundedness of the Hardy-Littlewood maximal operator in Morrey spaces with products of $A_p$ weights and power weights. In particular, we obtain sharp results for power weights. We will be under the assumptions that make possible to consider only balls of type II (see Proposition \ref{reduct}). We also use the following. Let  $B$ be a ball and let $g$ be a nonnegative function supported outside $2B$. There exists a ball $B'$ containing $B$ such that 
\begin{equation}\label{redmax}
Mg(x)\sim \frac 1{|B'|}\int_{B'} g\quad \text{for all } x\in B.
\end{equation}
Clearly the right-hand side is smaller than the left-hand side. The opposite inequality holds with a dimensional constant.

\begin{theorem}\label{sufHL}
Let $1<p<\infty$ and $w\in A_p$. Assume that $0\le \lambda_2\le n$ and $\lambda_1\sigma_w'+\lambda_2> 0$. The Hardy-Littlewood maximal operator is bounded on $\mathcal L^{p,\lambda}(|x|^\alpha w)$ for $\lambda_1\sigma_w'+\lambda_2<n$ and $\alpha$ satisfying the conditions of Lemma \ref{keylemma}. If $p=1$, the same result holds with a weak-type estimate.
\end{theorem}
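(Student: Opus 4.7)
The plan is to estimate $\|Mf\|_{\mathcal L^{p,\lambda}(|x|^\alpha w)}$ by reducing to balls of type II and splitting $f$ into a local and a global piece at each such scale. Since $\alpha\ge 0$ we have $|x|^\alpha w\in RH_{\sigma_w}$, so the hypothesis $\lambda_1\sigma_w'+\lambda_2>0$ together with Proposition \ref{reduct} and Remark \ref{remred} reduces the computation of the norm to a supremum over balls $B=B(x,r)$ with $r\le |x|/4$. Fix such a $B$ and write $f=f_1+f_2$ with $f_1=f\chi_{2B}$.

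For the local piece, the enlarged ball $2B$ is still of type II, so $|y|^\alpha\sim|x|^\alpha$ is essentially constant on it. The classical boundedness $M\colon L^p(w)\to L^p(w)$ (which requires $w\in A_p$, not the stronger $|x|^\alpha w\in A_p$) then yields
\begin{equation*}
\int_B(Mf_1)^p|y|^\alpha w \lesssim |x|^\alpha\int_{\rn}(Mf_1)^p w \lesssim |x|^\alpha\int_{2B}f^p w \sim \int_{2B}f^p|y|^\alpha w,
\end{equation*}
which by the definition of the Morrey norm on $2B$ and the doubling of $|x|^\alpha w$ is bounded by $C\,r^{\lambda_1}\bigl((|x|^\alpha w)(B)\bigr)^{\lambda_2/n}\|f\|_{\mathcal L^{p,\lambda}(|x|^\alpha w)}^p$.

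For the global piece, any ball $\tilde B\ni y\in B$ that meets $\mathrm{supp}(f_2)$ has radius at least $r/2$ and is contained in $B(x,s)$ for some $s\sim\mathrm{rad}(\tilde B)$. Hence, uniformly in $y\in B$,
\begin{equation*}
Mf_2(y)\le C\sup_{s\ge 2r}\frac{1}{|B(x,s)|}\int_{B(x,s)}f.
\end{equation*}
I would split this supremum at $s=|x|/2$: for $s\le|x|/2$ the ball $B(x,s)$ is of type II and Lemma \ref{key2}(a) applies directly; for $s>|x|/2$ the inclusion $B(x,s)\subset B(0,|x|+s)$ turns the average into an origin-centered one to which Lemma \ref{key2}(b) applies, with $R=|x|+s$. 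In each subcase one compares the resulting bound with $r^{\lambda_1}(|x|^\alpha w(B))^{\lambda_2/n-1}\|f\|^p$ by using \eqref{ineqRH} with some $\sigma<\sigma_w$ close enough to $\sigma_w$; the relevant exponents of $s/r$ (respectively of $R/|x|$) are non-positive precisely when $\lambda_1\sigma'+\lambda_2\le n$, a condition that follows from the strict hypothesis $\lambda_1\sigma_w'+\lambda_2<n$ by continuity. Integrating the resulting pointwise estimate for $(Mf_2)^p$ against $|y|^\alpha w$ on $B$, and using once more that $(|x|^\alpha w)(B)\sim|x|^\alpha w(B)$ on a type II ball, yields the required Morrey bound.

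The main obstacle is the bookkeeping in the global piece: the two regimes (type II and origin-centered) must be dominated consistently, and the various reverse Hölder exponents must collapse to the single admissibility condition $\lambda_1\sigma_w'+\lambda_2<n$. The case $p=1$ is handled in parallel by replacing the strong $L^p(w)$-boundedness of $M$ by its weak $(1,1)$-bound under $w\in A_1$ in the local piece; the global piece remains pointwise bounded, so combining a weak local estimate with a strong global one produces the required weak Morrey estimate.
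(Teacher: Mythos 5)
Your proposal is correct and follows essentially the same route as the paper's proof: reduction to type II balls via Remark \ref{remred}, the splitting $f=f_1+f_2$ at $2B$ with the local piece handled by the $L^p(w)$-boundedness of $M$ (weak $(1,1)$ for $p=1$) and $|y|^\alpha\sim|x|^\alpha$, and the global piece controlled by averages over enlarged balls split into the type II and origin-centered regimes, treated by Lemma \ref{key2}(a) and (b) together with \eqref{ineqRH}, yielding the condition $\lambda_1\sigma_w'+\lambda_2<n$. The only quibbles are cosmetic (e.g.\ the decisive ratio in the origin-centered regime is $R/r$ rather than $R/|x|$, and one should say $\sigma_{|x|^\alpha w}\ge\sigma_w$ rather than $|x|^\alpha w\in RH_{\sigma_w}$), neither of which affects the argument.
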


\begin{proof}
We can limit ourselves to consider balls of the form $B:=B(x,r)$ with $r\le |x|/4$ (Remark \ref{remred}). For a given nonnegative $f$ we write $f=f_1+f_2$, where $f_1=f\chi_{2B}$ and $f_2=f\chi_{(2B)^c}$. Then $Mf(y)\le Mf_1(y)+Mf_2(y)$ and we can work with each one of both terms separately. We denote $u(B)=r^{\lambda_1} w(B)^{\lambda_2/n}$.

For the first term we have
\begin{equation*}
\frac 1{u(B)}\int_B Mf_1(y)^p |y|^\alpha w(y)dy\le C \frac {|x|^\alpha}{u(B)}\int_{\R^n} f_1(y)^p w(y)dy,
\end{equation*}
where we used the fact that $|y|\sim |x|$ for $y\in B$ and the boundedness of $M$ on $L^p(w)$ for $w\in A_p$. Using the support condition of $f_1$ we have \begin{equation*}
|x|^\alpha \int_{\R^n} f_1(y)^p w(y)dy\le C \int_{2B} f(y)^p |y|^\alpha w(y)dy\le u(2B) \|f\|_{\mathcal L^{p,\lambda}(|x|^\alpha w)}.
\end{equation*}
The doubling property of $u$ leads to the desired estimate. 

To deal with $Mf_2(y)$ we take into account the observation previous to the theorem and use \eqref{redmax} with $g=f_2$. Then it is enough to estimate 
\begin{equation*}
\frac 1{u(B)}\int_B \left(\frac 1{|B'|}\int_{B'} f_2\right)^p |y|^\alpha w(y)dy,
\end{equation*}
where $B\subset B'$. This is in turn bounded by a constant times  
\begin{equation}\label{bound1}
\frac {|x|^\alpha w(B)}{u(B)}\left(\frac 1{|B'|}\int_{B'} f\right)^p.
\end{equation}
We distinguish two type of balls $B'$ depending of the relation between the center $x_{B'}$ and the radius $r_{B'}$. If $r_{B'}\le |x_{B'}|/2$, then $|y|\sim |x|$ for $y\in B'$. If $r_{B'}>|x_{B'}|/2$, then we can replace $B'$ with a ball centered at the origin and comparable radius paying with a dimensional constant. Hence we can assume that $B'$ is of the form $B(0,r_{B'})$ in the latter case. We use the estimates of Lemma \ref{key2}.

In the first case we replace $u(B)=r^{\lambda_1}\, (|x|^\alpha\,w(B))^{\lambda_2/n}$ and use \eqref{smallball} to bound \eqref{bound1} with
\begin{equation*}
\aligned
&C \frac {|x|^\alpha w(B)}{r^{\lambda_1}\, (|x|^\alpha\,w(B))^{\lambda_2/n}}\,r_{B'}^{\lambda_1} \, (|x|^\alpha w(B'))^{\frac{\lambda_2}n-1}\|f\|_{\mathcal L^{p,\lambda}(|x|^\alpha w)}^p \\
&\qquad\le C \left(\frac {r}{r_{B'}}\right)^{-\lambda_1} \left(\frac {w(B)}{w(B')}\right)^{1-\frac{\lambda_2}n}\|f\|_{\mathcal L^{p,\lambda}(|x|^\alpha w)}\\
&\qquad\le C \left(\frac {r}{r_{B'}}\right)^{-\lambda_1+\left(1-\frac{\lambda_2}n\right)\frac n{\sigma'}}\|f\|_{\mathcal L^{p,\lambda}(|x|^\alpha w)}^p,
\endaligned
\end{equation*}
where $\sigma<\sigma_w$ (that is, $w\in RH_\sigma$). We get the boundedness when the exponent is positive because $r<r_{B'}$. This gives the condition $\lambda_1\sigma_w'+\lambda_2<n$.

If $B'$ is a ball centered at the origin, we use \eqref{bigball} to bound the right-hand side of \eqref{bound1} with
\begin{equation*}
C \frac {|x|^\alpha w(B)}{r^{\lambda_1}\, (|x|^\alpha\,w(B))^{\lambda_2/n}}\,r_{B'}^{\lambda_1-\alpha\left(1-\frac{\lambda_2}{n}\right)} w(B')^{\frac{\lambda_2}n-1} \|f\|_{\mathcal L^{p,\lambda}(|x|^\alpha w)}.
\end{equation*}
What we need is
\begin{equation}\label{bound2}
\left(\frac {r_{B'}}{r}\right)^{\lambda_1} \left(\frac {|x|}{r_{B'}}\right)^{\alpha\left(1-\frac{\lambda_2}{n}\right)} \left(\frac {w(B)}{w(B')}\right)^{1-\frac{\lambda_2}{n}} \le C.
\end{equation}
The term in the middle is bounded by $1$ for $\lambda_2\le n$ because $|x|<r_{B'}$. Then we are in the same situation as in the previous case. 

When $p=1$ we need to use the weak $(1,1)$ inequality of $M$ with $A_1$ weights to deal with $Mf_1$. For $Mf_2$ the strong estimate above is valid for $w\in A_1$.
\end{proof}

Let us summarize the results for the special cases of weighted Morrey spaces mentioned in the introduction:
\begin{itemize}
\item in the case $\lambda_1=0$ and $0<\lambda_2<n$ the reverse H\"older condition does not play any role and the condition on $\alpha$ is given by $0\le \alpha < \frac {n\lambda_2\theta}{n-\lambda_2}$;
\item  in the case $0<\lambda_1<n$ and $\lambda_2=0$ we need $n-\lambda_1 \sigma_w'> 0$ and $\alpha<\lambda_1$ (this is the result in \cite{DR19});
\item in the case $\lambda_1<0$ and $\lambda_2=n$, we need $n+\lambda_1 \sigma_w'> 0$ and $\alpha$ can take any positive value.
\end{itemize}

In the case of power weights $|x|^\beta$ we can give the precise range even with an endpoint estimate for negative $\beta$. The condition $\lambda_1\sigma_w'+\lambda_2> 0$ of the previous theorem for positive $\beta$ is $\lambda_1+\lambda_2> 0$. We can extend it to $\lambda_1 + \lambda_2 (1+\beta/n)>0$ by using the result in Lemma \ref{keypw}.

\begin{theorem}\label{pwmaxhl}
 Let $1<p<\infty$, $\beta>-n$, and $-\lambda_2 \beta/n< \lambda_1 + \lambda_2 < n$. The Hardy-Littlewood maximal operator is bounded on $\mathcal L^{p,\lambda}(|x|^\beta)$ if and only if $\beta$ satisfies the restrictions \eqref{rangepowerw1}. For $p=1$ they hold as weak-type estimates.
 \end{theorem}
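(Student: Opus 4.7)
The plan is to establish sufficiency following the structure of the proof of Theorem \ref{sufHL}, but replacing the uses of Lemmas \ref{keylemma} and \ref{key2} with the sharper power-weight statements in Lemmas \ref{keypw} and \ref{key2pw}. Necessity is already supplied by Proposition \ref{necrangepower2}.

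The assumption $-\lambda_2\beta/n<\lambda_1+\lambda_2$ is exactly $\lambda_1+\lambda_2(1+\beta/n)>0$, so by Remark \ref{remred} the supremum defining $\|f\|_{\mathcal L^{p,\lambda}(|x|^\beta)}$ may be taken over balls of type II only. Fix one such ball $B=B(x,r)$, and recall $w(B)\sim|x|^\beta r^n$ so that $u(B):=r^{\lambda_1}w(B)^{\lambda_2/n}\sim r^{\lambda_1+\lambda_2}|x|^{\beta\lambda_2/n}$. Decompose $f=f_1+f_2$ with $f_1=f\chi_{2B}$. For the local part, since $|y|\sim|x|$ on $2B$, the weight $|y|^\beta$ is comparable to the constant $|x|^\beta$ there, and the unweighted $L^p$ boundedness of $M$ (or its weak $(1,1)$ bound for $p=1$), together with the doubling of $u$, yields the required estimate just as in Theorem \ref{sufHL}.

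For the distant part, \eqref{redmax} produces a ball $B'\supset B$ with $Mf_2(y)\sim|B'|^{-1}\int_{B'}f$ on $B$, so the task reduces to bounding
\[
r^{n-\lambda_1-\lambda_2}\,|x|^{\beta(1-\lambda_2/n)}\left(\frac{1}{|B'|}\int_{B'}f\right)^p.
\]
If $B'$ is itself of type II, then $x\in B'$ forces $|x_{B'}|\sim|x|$, and \eqref{smallballpw} reduces the above to $(r/r_{B'})^{n-\lambda_1-\lambda_2}\|f\|^p_{\mathcal L^{p,\lambda}(|x|^\beta)}$, which is uniformly bounded since $r\le r_{B'}$ and $\lambda_1+\lambda_2<n$. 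Otherwise, exactly as in Theorem \ref{sufHL}, we enlarge $B'$ to a comparable ball $B(0,R)\supset B$ and apply \eqref{bigballpw}, whose hypothesis is precisely the upper bound $\beta(1-\lambda_2/n)<\lambda_1+\lambda_2+n(p-1)$ of \eqref{rangepowerw1}.

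The main obstacle, and the place where the lower bound in \eqref{rangepowerw1} enters, is this last case. Plugging \eqref{bigballpw} into the displayed quantity leads to a constant multiple of $(r/R)^{n-\lambda_1-\lambda_2}(|x|/R)^{\beta(1-\lambda_2/n)}$. Using $r\le|x|/4$ and $n-\lambda_1-\lambda_2>0$ bounds this by $(|x|/R)^{n-\lambda_1-\lambda_2+\beta(1-\lambda_2/n)}$, and since $|x|\le R$ the expression is bounded uniformly exactly when the exponent is nonnegative, i.e.\ $\beta(1-\lambda_2/n)\ge\lambda_1+\lambda_2-n$. Thus both halves of \eqref{rangepowerw1} are actually used, and sufficiency follows. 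For $p=1$, only the handling of $Mf_1$ changes, producing a weak-type bound throughout.
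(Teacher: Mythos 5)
Your argument is correct and is essentially the paper's own proof: necessity from Proposition \ref{necrangepower2}, reduction to type II balls, the split $f=f_1+f_2$ with the local part handled by $|y|^\beta\sim|x|^\beta$, and the distant part via \eqref{redmax} together with \eqref{smallballpw} and \eqref{bigballpw}, the right endpoint of \eqref{rangepowerw1} entering through the hypothesis of \eqref{bigballpw} and the left endpoint through the final exponent comparison with $|x|\le R$. No gaps; the details match the published proof.
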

 
\begin{proof}
 The necessity has been proved in Proposition \ref{necrangepower2}. For the sufficiency we proceed as in the previous theorem and consider $Mf_1$ and $Mf_2$. 
 
To deal with $Mf_1$ we notice that
\begin{equation*}
\int_B Mf_1(y)^p |y|^\beta dy\le C |x|^\beta\int_{\R^n} (Mf_1)^p\le C |x|^\beta\int_{2B} f^p \le C \int_{2B} f^p|y|^\beta,
\end{equation*}
and the estimate follows without any condition on $\beta$.

On the other hand, with $B'$ as in the previous theorem, we have
\begin{equation*}
\int_B Mf_2(y)^p |y|^\beta dy\le C |x|^\beta r^n \left(\frac 1{|B'|}\int_{B'} f\right)^p
\end{equation*}
and we apply the estimates of Lemma \ref{key2pw}. If $B'$ is such that $r_{B'}\le |x_{B'}|/2$, we have $|x_B'|\sim |x|$ and using \eqref{smallballpw} the estimate holds. If $B'$ is centered at the origin we use \eqref{bigballpw} for which we need the right-hand side condition of \eqref{rangepowerw1}. If this is granted we have
\begin{equation*}
\aligned
\frac 1{r^{\lambda_1}(|x|^\beta r^n)^{\lambda_2/n}}&\int_B Mf_2(y)^p |y|^\beta dy\\ &\le C \left(\frac {|x|}{r_{B'}}\right)^{\beta\left(1-\frac{\lambda_2}{n}\right)} \left(\frac {r}{r_{B'}}\right)^{n-\lambda_1-\lambda_2} \|f\|^p_{\mathcal L^{p,\lambda}(|x|^\beta)}.
\endaligned
\end{equation*}
We use the condition $\lambda_1+\lambda_2\le n$ to replace $r$ with $|x|$ and the needed estimate holds for the left-hand side condition of \eqref{rangepowerw1} because $|x|<r_{B'}$.  
\end{proof}

The results obtained in Theorem \ref{pwmaxhl} applied to the special cases of power-weighted Morrey spaces are the following:
\begin{itemize}
\item in the case $\lambda_1=0$ and $0<\lambda_2<n$, $M$ is bounded on $\mathcal L^{p,\lambda}(|x|^\beta)$ if and only if $-n<\beta< \frac{n}{n-\lambda_2}\left[\lambda_2 + n(p-1)\right]$;
\item  in the case $0<\lambda_1<n$ and $\lambda_2=0$, $M$ is bounded on $\mathcal L^{p,\lambda}(|x|^\beta)$ if and only if  $-n+\lambda_1\le \beta < n(p-1)+\lambda_1$ (this is the result in \cite{DR19});
\item in the case $\lambda_2=n$ and $-np<\lambda_1<0$, $M$ is bounded on $\mathcal L^{p,\lambda}(|x|^\beta)$ if $-n-\lambda_1< \beta$. For $\beta< -n-\lambda_1$ the corresponding space is $\{0\}$ according to part (c) of Proposition \ref{betaspaces}. For $-n-\lambda_1= \beta$ and $\lambda_1>-n$ we are in part (b) of the same proposition and the result follows  from the boundedness of $M$ on $L^p(|x|^\beta)$ and on  $\mathcal L^{p,(\lambda_1+n,0)}(1)$.  The condition $0<\lambda_1+np$ is necessary according to the right-hand side of \eqref{rangepowerw1}. 
\item In all the described cases the result holds for $p=1$ with the weak-type estimate.
\end{itemize}

An interesting unsolved question is the characterization of the class of weights for which $M$ is bounded on $\mathcal L^{p,\lambda}(w)$. This seems to be unknown even for the spaces of Komori-Shirai type and of Samko type. See \cite{Ta15} for some partial results. We would like to point out that \cite[Theorem 1.1]{WZC17} claims that for spaces of Komori-Shirai type the class $A_{\frac{np}{n-\lambda_2}}$ (recall that $\lambda_1=0$) is necessary and sufficient. But the sufficiency is not proved in the paper and is attributed to \cite{KS09}, which is not accurate because the result in \cite{KS09} only gives the sufficiency of $w\in A_p$. In \cite{NST19} a characterization is obtained for the boundedness on weighted local Morrey spaces, which are defined as the usual Morrey spaces but considering only balls centered at the origin.

\section{Extrapolation techniques}\label{bost}

In this section we use extrapolation techniques as in \cite{DR18, DR19} to obtain abstract results in the class of weighted Morrey spaces defined in this paper. They can subsequently be applied to a variety of operators, their vector-valued extensions, weak-type estimates, etc. 

\begin{theorem}\label{teo1K}
Let $1\le p_0<\infty$ and let $\mathcal F$ be a collection of nonnegative measurable pairs of functions. Assume that for every $(f,g)\in \mathcal F$ and every $u\in A_{p_0}$ we have
\begin{equation}\label{hyp2}
\|g\|_{L^{p_0}(u)}\le C \|f\|_{L^{p_0}(u)},
\end{equation}
where $C$ does not depend on the pair $(f,g)$ and it depends on $u$ only in terms of $[u]_{A_{p_0}}$. 
Then for $1<p<\infty$ and $w\in A_p$ it holds
 \begin{equation}\label{boundmorrey}
\|g\|_{\mathcal L^{p,\lambda}(|x|^\alpha w)}\le C \|f\|_{\mathcal L^{p,\lambda}(|x|^\alpha w)},
\end{equation}
for $0\le\lambda_2\le n$, $0<\lambda_1 {\sigma_w'}+ \lambda_2 <n$, and 
$\alpha$ as in \eqref{bertiz5}. If \eqref{hyp2} holds for $p_0=1$, then the conclusion \eqref{boundmorrey} is satisfied for $p=1$.
\end{theorem}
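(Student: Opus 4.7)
The plan is to reduce the Morrey estimate to the Lebesgue hypothesis \eqref{hyp2} by a ball-by-ball localization combined with a Rubio de Francia type construction of auxiliary weights. By Proposition \ref{reduct} and Remark \ref{remred}, together with the fact that $\sigma_{|x|^\alpha w} \ge \sigma_w$ (so that the reduction condition $\lambda_1 \sigma_w' + \lambda_2 > 0$ also applies to $|x|^\alpha w$), the supremum defining $\|g\|_{\mathcal L^{p,\lambda}(|x|^\alpha w)}$ can be restricted to balls of type II, i.e.\ $B = B(x_B, r_B)$ with $r_B \le |x_B|/4$. On such a ball $|y|^\alpha \sim |x_B|^\alpha$, rendering $|y|^\alpha w$ locally comparable to the $A_p$ weight $w$ up to the constant factor $|x_B|^\alpha$. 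As a preliminary step I would also apply classical Rubio de Francia extrapolation to upgrade the hypothesis \eqref{hyp2} at exponent $p_0$ to the analogous estimate $\|g\|_{L^p(v)} \le C\|f\|_{L^p(v)}$ for every $v\in A_p$, with constant depending only on $[v]_{A_p}$.

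For each fixed type-II ball $B$, the strategy is to apply this Lebesgue estimate to a carefully chosen weight $v_B \in A_p$ whose $A_p$ constant is uniform in $B$ and which satisfies $v_B \ge c\,|y|^\alpha w\,\chi_B$ pointwise. The integral $\int |f|^p v_B$ is then split into the local part $\int_{2B} |f|^p v_B$, where $v_B \sim |y|^\alpha w$ so the bound is immediate from the definition of the Morrey norm, and the far-field part $\int_{(2B)^c}|f|^p v_B$. The latter is handled by decomposing the complement into dyadic annuli around $x_B$, covering each annulus by a bounded number of balls, and applying Lemma \ref{key2} — \eqref{smallball} for the covering balls of type II and \eqref{bigball} for balls centered at the origin — which produces a geometric series. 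This series converges precisely under the hypotheses $\lambda_1\sigma_w'+\lambda_2<n$ and $\alpha$ satisfying \eqref{bertiz5}, and sums to $C\,u(B)\,\|f\|_{\mathcal L^{p,\lambda}(|x|^\alpha w)}^p$ with $u(B)=r_B^{\lambda_1}(|x_B|^\alpha w(B))^{\lambda_2/n}$, yielding the required Morrey control of $\int_B |g|^p|y|^\alpha w$.

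The main obstacle is the construction of the weight $v_B$ with all three required properties at once: uniform $A_p$ constant, pointwise domination of $|y|^\alpha w\,\chi_B$, and controlled decay away from $B$. The natural approach is to build $v_B$ from fractional powers of Hardy--Littlewood maximal functions applied to auxiliary functions involving $\chi_{2B}$, $|y|^\alpha$ and $w$, invoking the preliminary property (i) of Section \ref{bi} that $(Mh)^{1/s}\in A_1$ for $s>1$, together with the factorization of $A_p$ weights as products of an $A_1$ weight and the $(1-p)$-power of an $A_1$ weight. This machinery is the technical heart of the argument and is inherited from the scheme developed in \cite{DR18, DR19}. The case $p=1$ runs along the same lines with the strong-type estimate replaced by its weak-type analogue and the $A_p$ weights by $A_1$ weights.
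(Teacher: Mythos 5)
Your overall architecture differs from the paper's, and the step you defer is precisely where the difficulty lies. The paper does not construct, for each ball, a single $A_p$ weight $v_B$ dominating $|y|^\alpha w\chi_B$; instead it first reduces the hypothesis to the level $p_0=1$ (by classical extrapolation applied to the pairs $(f^{p_0},g^{p_0})$, later rescaling the Morrey exponent to $p/p_0$), then for a type II ball $B$ linearizes $\bigl(\int_B g^pw\bigr)^{1/p}=\int_B ghw$ by duality and majorizes $hw\chi_B$ by the $A_1$ weight $M(h^sw^s\chi_B)^{1/s}$, with $s$ chosen so that $w^{1-p'}\in A_{p'/s}$. The hypothesis is then applied with this $A_1$ weight, and the decay of $M(h^sw^s\chi_B)$ on the annuli $2^{j+1}B\setminus 2^jB$ (together with the bound \eqref{p3}) produces exactly first-power averages of $f$ over $2^{j+1}B$, which is what Lemma \ref{key2} estimates; the geometric series converges under $\lambda_1\sigma_w'+\lambda_2<n$. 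Your scheme skips the duality step, and this creates two concrete problems. First, the ``technical heart'' you leave to the machinery of \cite{DR18,DR19} — a weight $v_B$ with uniform $A_p$ constant, $v_B\gtrsim|y|^\alpha w$ on $B$, and enough decay off $B$ — is not what those papers (or this one) construct, and it is genuinely delicate: natural candidates such as $|x_B|^\alpha w\,(M\chi_B)^{\delta}$ need not lie in $A_p$ with uniform constant, since the product of an $A_p$ weight with a decaying $A_1$ factor can leave $A_p$ (e.g.\ when $w$ is close to the lower endpoint of local integrability), and the admissible amount of decay depends on $w$ in a way your sketch does not quantify against the required convergence of the far-field series. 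Avoiding precisely this obstruction is the reason the paper dualizes and only ever needs $A_1$ majorants built from maximal functions.

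Second, even granting some $v_B$, your far-field step misuses Lemma \ref{key2}: the estimates \eqref{smallball} and \eqref{bigball} control $\bigl(\frac1{|B'|}\int_{B'}f\bigr)^p$, i.e.\ $p$-th powers of unweighted averages of $f$, and by Jensen these do not control $\int_{B'}f^p$ or $\int_{B'}f^p v_B$, which is what your decomposition of $\int_{(2B)^c}f^pv_B$ produces. Those first-power averages only appear after the dualization/majorization step, where the weight on each annulus is essentially the constant \eqref{maxfar}. To make your route work you would instead need a pointwise comparison $v_B\lesssim \varepsilon_j\,|y|^\alpha w$ on the $j$-th annulus with $\varepsilon_j$ summable against the growth $u(2^jB)/u(B)$ (and a separate treatment of the region where $|y|\not\sim|x_B|$, which the paper handles through Lemma \ref{keylemma}); none of this is supplied, and it is exactly where the hypotheses $0\le\lambda_2\le n$, $\lambda_1\sigma_w'+\lambda_2<n$ and \eqref{bertiz5} must enter. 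As written, the proposal correctly identifies the reduction to type II balls and the role of a geometric series, but the central construction is missing and the lemma invoked to close the far-field estimate does not apply to the quantity you need to bound. (A minor additional point: for $p=1$ the theorem's conclusion is the strong-type Morrey bound under \eqref{hyp2} with $p_0=1$; no weak-type substitute is involved there.)
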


\begin{remark}
Notice that the conditions on $\lambda_1$ and $\lambda_2$ exclude the possibility of $\lambda_1+ \lambda_2=n$.  This was to be expected because otherwise, according to Proposition \ref{special}, we would be extrapolating to a space of $L^\infty$ type and this is not possible. 
\end{remark}

\begin{proof}
 First we assume $p_0=1$, that is, $\eqref{hyp2}$ with $u\in A_1$. 
 
 Let $p>1$ and $w\in A_p$. Hence $w^{1-p'}\in A_{p'}$ and we can choose $s>1$ such that $w^{1-p'}\in A_{p'/s}$. Let $B=B(x,r)$ be a ball of type II. Then we need to estimate 
 \begin{equation}\label{laubat}
\left(\frac 1{r^{\lambda_1}(|x|^\alpha w(B))^{\frac {\lambda_2}n}}\int_{B} g^p |y|^\alpha w\right)^{\frac 1p}\sim
\frac {|x|^{\alpha\left(1-\frac {\lambda_2}n\right)\frac 1p}}{r^{\frac {\lambda_1}p}\,w(B)^{\frac {\lambda_2}{np}}}\left(\int_{B} g^p w\right)^{\frac 1p}.
\end{equation}
By duality there exists $h$ nonnegative such that 
\begin{equation*}
\int_B h^{p'}w=1 \quad \text{and} \quad \left(\int_{B} g^p w\right)^{\frac 1p}=\int_{B} g h w.
\end{equation*}
Since $hw\chi_B\le M(h^sw^s\chi_B)^{1/s}$ and $M(h^sw^s\chi_B)^{1/s}$ is an $A_1$ weight for $s>1$, we can write
\begin{equation*}
\int_{B} g h w\le \int_{\rn} g M(h^sw^s\chi_B)^{1/s}\le C \int_{\rn} f M(h^sw^s\chi_B)^{1/s},
\end{equation*}
where in the second inequality we use the hypothesis. To ensure that $M(h^sw^s\chi_B)^{1/s}$ is in $A_1$ we need to check that it is finite almost everywhere and for this it suffices to show that $h^sw^s\chi_B$ is integrable. We prove this and get a bound for future use. We have
\begin{align} 
\begin{split} \label{p3}
	 \left(\int_{B} h^s w^{s-1} w\right)^\frac{1}{s}
	&\le\left(\int_{B} h^{p'} w\right)^\frac{1}{p'}
	\left(\int_{B} w^{\frac{s(p'-1)}{p'-s}} \right)^{\frac1{s}-\frac{1}{p'}}\\ 
	& \le c\  |B|^{\frac{1}{s}}w^{1- p'}(B)^{-\frac{1}{p'}}
		\le c\ w(B)^{\frac{1}{p}}r^{-\frac{n}{s'}},
\end{split} 
\end{align}
where the second inequality holds because $w^{1-p'}\in A_{p'/s}$ (the exponent of $w$ in the integral can be written as $(1-p')/(1-(p'/s)$) and in the last one we use 
\begin{equation*}
c_nr^n=|B|\le w(B)^{\frac{1}{p}} w^{1-p'}(B)^{\frac{1}{p'}}.
\end{equation*}  

We decompose the integral of $f M(h^sw^s\chi_B)^{1/s}$ over $\rn$  in the form
\begin{equation}\label{bitan}
 \int_{2B} f M(h^sw^s\chi_B)^{1/s} + \sum_{j=1}^\infty \int_{2^{j+1}B\setminus 2^{j}B} f M(h^sw^s\chi_B)^{1/s}.
\end{equation}

To deal with the first term in \eqref{bitan} we use H\"older's inequality,
\begin{equation*}
\int_{2B} f M(h^sw^s\chi_B)^{1/s}\le \left(\int_{2B} f^p w\right)^{\frac 1p} \left(\int_{2B} M(h^sw^s\chi_B)^{p'/s} w^{1-p'}\right)^{\frac 1{p'}}.
\end{equation*}
Since $s$ has been chosen such that $w^{1-p'}\in A_{p'/s}$, we use the weighted boundedness of $M$ to see that the last term is less than a constant. On the other hand,
\begin{equation*}
\aligned
 \left(\int_{2B} f^p w\right)^{\frac 1p} &\le  C |x|^{-\alpha/p}\left(\int_{2B} f^p |y|^\alpha w\right)^{\frac 1p}\\
& \le 
C |x|^{-\alpha/p}{(2r)^{\lambda_1/p}(|x|^\alpha w(2B))^{\lambda_2/np}} \|f\|_{\mathcal L^{p,\lambda}(|x|^\alpha w)}.
\endaligned
\end{equation*}
Together with \eqref{laubat} and the doubling property of $w$ we obtain here the desired bound. 

To deal with the integral on the annulus $2^{j+1}B\setminus 2^{j}B$ we first realize that for a function supported on $B$ the maximal function at a point $y$ far from $B$ only needs to take into account averages on balls containing $y$ and intersecting $B$. Therefore, for $y\in 2^{j+1}B\setminus 2^{j}B$, such balls have radius at least $2^{j-1}r$ and up to a dimensional constant the maximal function is comparable to the average on $2^{j+1}B$. Thus the value of $M(h^sw^s\chi_B)(y)$ for $y\in 2^{j+1}B\setminus 2^{j}B$ is equivalent to 
\begin{equation}\label{maxfar}
\frac 1{(2^{j}r)^n} \int_B h^sw^s.
\end{equation}

We use the bound \eqref{p3} for the integral and write 
\begin{equation*}
\sum_{j=1}^\infty \int_{2^{j+1}B\setminus 2^{j}B} f M(h^sw^s\chi_B)^{1/s}\le C\sum_{j=1}^\infty 2^{\frac{jn}{s'}}\left(\frac 1{|2^{j+1}B|}{\int_{2^{j+1}B}} f \right) w(B)^{\frac{1}{p}}.
\end{equation*}  
Let $j_0$ be the largest $j$ for which $2^{j+1}r\le |x|/2$. We can apply inequality \eqref{smallball} to the balls $2^{j+1}B$ for $j\le j_0$. Then
\begin{equation*}
\aligned
& 2^{\frac{jn}{s'}}\left(\frac 1{|2^{j+1}B|}{\int_{2^{j+1}B}} f\right) w(B)^{\frac{1}{p}}\\
&\qquad \le C 2^{\frac{jn}{s'}} [(2^j r)^{\lambda_1} (|x|^\alpha w(2^{j+1}B))^{\frac{\lambda_2}n-1}]^{\frac 1p}w(B)^{\frac{1}{p}}\|f\|_{\mathcal L^{p,\lambda}(|x|^\alpha w)}\\
& \qquad\le C 2^{\frac{jn}{s'}} (2^j r)^{\frac {\lambda_1}p} |x|^{\alpha\left(\frac{\lambda_2}n-1\right)\frac 1p}2^{jn\left(\frac{\lambda_2}n-1\right)\frac 1{p\sigma'}} w(B)^{\frac{\lambda_2}{np}}\|f\|_{\mathcal L^{p,\lambda}(|x|^\alpha w)},
\endaligned
\end{equation*}  
for $\sigma< \sigma_w$, using in the second inequality the bound given by \eqref{ineqRH} for  $w\in RH_\sigma$ and $\lambda_2<n$.

In the case $j>j_0$ we have $2^{j+1}B=B(x, 2^{j+1}r)\subset B(0,2^{j+3}r)$ and we can replace the average of $f$ over $2^{j+1}B$ with the average over $B(0,2^{j+3}r)$. Using now \eqref{bigball} we get
\begin{equation*}
\aligned
&2^{\frac{jn}{s'}}\left(\frac 1{|B(0,2^{j+3}r)|}{\int_{B(0,2^{j+3}r)}} f \right) w(B)^{\frac{1}{p}}\\
&\qquad\le C 2^{\frac{jn}{s'}}[(2^jr)^{\lambda_1+\alpha \left(\frac{\lambda_2}n-1\right)}w(B(0,2^{j+3}r))^{\frac{\lambda_2}n-1}]^{\frac 1p}w(B)^{\frac{1}{p}}\|f\|_{\mathcal L^{p,\lambda}(|x|^\alpha w)}\\
&\qquad\le C 2^{\frac{jn}{s'}} (2^j r)^{\frac {\lambda_1}p} |x|^{\alpha\left(\frac{\lambda_2}n-1\right)\frac 1p}2^{jn\left(\frac{\lambda_2}n-1\right)\frac 1{p\sigma'}} w(B)^{\frac{\lambda_2}{np}}\|f\|_{\mathcal L^{p,\lambda}(|x|^\alpha w)},
\endaligned
\end{equation*} 
using the fact that $|x|\le 2^{j+2}r$ and the bound given by $w\in RH_\sigma$, taking into account in both cases that $\lambda_2<n$. 

Using \eqref{laubat} we obtain the needed bound if 
\begin{equation}\label{sumgeo}
\sum_{j=1}^{\infty} 2^{j\left(\frac{n}{s'}+\frac{\lambda_1}p+ \frac{\lambda_2-n}{p\sigma'}\right)}\le C.
\end{equation} 
The condition $\lambda_1 {\sigma_w'}+ {\lambda_2 -n}<0$ allows us to choose $\sigma$ close to $\sigma_w'$ and  $s$ close enough to $1$ such that the exponent of $2^j$ in the series is negative. This ends the proof for $p_0=1$ and $p>1$.

For $p_0=1$ and $p=1$ the situation is similar, but easier, because duality is not needed. First we have \eqref{laubat} with $p=1$. For $w\in A_1$, choose $s>1$ such that $w^s\in A_1$. This implies $Mw^s(x)\le C w^s(x)$ a.e. and also
\begin{equation}\label{lauzazpi}
\left(\frac{w^s(B)}{|B|}\right)^{\frac 1s}\le C \frac{w(B)}{|B|}.
\end{equation}
Notice that this is the same as $w\in RH_s$. Hence any $s\in (1,\sigma_w)$ can be chosen. Proceeding as before, 
\begin{equation*}
\aligned
\int_{B} g w& \le \int_{\rn} g M(w^s\chi_B)^{1/s}\le C \int_{\rn} f M(w^s\chi_B)^{1/s}\\
 & \le C \int_{2B} f M(w^s\chi_B)^{1/s} + C \sum_{j=1}^\infty \int_{2^{j+1}B\setminus 2^{j}B} f M(w^s\chi_B)^{1/s}.
 \endaligned
\end{equation*}
In the first integral we use $M(w^s\chi_B)^{1/s}\le M(w^s)^{1/s}\le Cw$ and
\begin{equation*}
\int_{2B} f w\le 
C |x|^{-\alpha}{(2r)^{\lambda_1}(|x|^\alpha w(2B))^{\lambda_2/n}} \|f\|_{\mathcal L^{1,\lambda}(|x|^\alpha w)}.
\end{equation*}

As before, the value of $M(w^s\chi_B)(y)$ at $y\in 2^{j+1}B\setminus 2^{j}B$ is comparable to \eqref{maxfar} with $h\equiv 1$. Since $w\in A_1$ we also have
\begin{equation*}
\frac{w(2^{j+1}B)}{|2^{j+1}B|}\le C w(y) \quad \text{a.e. } y\in 2^{j+1}B.
\end{equation*}
Therefore we can write
\begin{equation*}
\aligned
\int_{2^{j+1}B\setminus 2^{j}B} f M(w^s\chi_B)^{1/s}&\le C\left(\int_{2^{j+1}B} f w\right) \frac{|2^{j+1}B|}{w(2^{j+1}B)}\ \left(\frac{w^s(B)}{|2^{j+1}B|}\right)^{\frac 1s}\\
&\le C\left(\int_{2^{j+1}B} f w\right) 2^{\frac {jn}{s'}} \frac{w(B)}{w(2^{j+1}B)},
\endaligned
\end{equation*}
using \eqref{lauzazpi}. We define $j_0$ as in the proof of the case $p>1$ and in the balls with $j\le j_0$ we use that $|y|\sim |x|$ to insert $|y|^\alpha$ inside the integral. For $j>j_0$ we replace $2^{j+1}B$ with a ball centered at the origin and use \eqref{basicineq2} with $p=1$. The details are left to the reader.

Assume now that \eqref{hyp2} holds for some $p_0>1$. By the well-known extrapolation for weighted Lebesgue spaces we know that it holds for any $p_0>1$, in particular, 
\begin{equation}\label{hyp22}
\|g^{p_0}\|_{L^{1}(u)}\le C \|f^{p_0}\|_{L^{1}(u)} \quad \text{for all}\ u\in A_1.
\end{equation}

Given $p>1$ and $w\in A_p$, choose $p_0\in (1,p)$ such that $w\in A_{p/p_0}$. For such $p_0$ we have \eqref{hyp22} and we can apply the first part of the proof to the pair $(f^{p_0}, g^{p_0})$ to deduce that for the given $w$ it holds 
\begin{equation*}
\|g^{p_0}\|_{{\mathcal L^{\frac p{p_0},\lambda}(|x|^\alpha w)}}\le C \|f^{p_0}\|_{{\mathcal L^{\frac p{p_0},\lambda}(|x|^\alpha w)}},
\end{equation*}
which is the desired inequality.
\end{proof}

As for the Hardy-Littlewood maximal operator we have a theorem for power weights in which the range of $\lambda_1$ and $\lambda_2$ is larger. 

\begin{theorem}\label{teo2K}
Let $\lambda_1 + \lambda_2<n$. Let $\beta>-n$ be such that
\begin{equation*}
\max(0, \beta-n(p-1))<\lambda_1 + \lambda_2 (1+\beta/n)<n+\beta.
\end{equation*}
Assume that for a collection of pairs of nonnegative functions the hypotheses of Theorem \ref{teo1K} hold. Then for $1<p<\infty$ it holds
 \begin{equation}\label{boundmorreypw}
\|g\|_{\mathcal L^{p,\lambda}(|x|^\beta)}\le C \|f\|_{\mathcal L^{p,\lambda}(|x|^\beta)}.
\end{equation}
If \eqref{hyp2} holds for $p_0=1$, then the conclusion \eqref{boundmorreypw} is satisfied for $p=1$.
\end{theorem}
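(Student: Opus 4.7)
The strategy is to mirror the proof of Theorem \ref{teo1K} while replacing the generic weight estimates of Lemma \ref{key2} by the sharper power-weight estimates of Lemma \ref{key2pw}; this substitution is precisely what broadens the admissible range of $\beta$. The reduction of the case $p_0 > 1$ to $p_0 = 1$ proceeds as at the end of the proof of Theorem \ref{teo1K}: given $p>1$, by standard Rubio de Francia extrapolation the hypothesis \eqref{hyp2} at the original $p_0$ yields the $L^1$ estimate \eqref{hyp22} for the pair $(f^{\tilde p_0}, g^{\tilde p_0})$ for any $\tilde p_0 > 1$, and the $p_0 = 1$ Morrey statement is then invoked at exponent $p/\tilde p_0$. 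Since the hypothesis $\beta - n(p-1) < \lambda_1 + \lambda_2 (1+\beta/n)$ is strict, it survives at $p/\tilde p_0$ if $\tilde p_0$ is close enough to $1$. It thus suffices to treat the case $p_0 = 1$.

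For $p_0 = 1$ and $p > 1$, pick $\alpha$ with
\[
\max(0, \beta - n(p-1)) < \alpha < \lambda_1 + \lambda_2 (1+\beta/n),
\]
which is feasible by hypothesis. Set $w(x) := |x|^{\beta-\alpha}$; then $-n < \beta - \alpha < n(p-1)$, so $w \in A_p$, and $|x|^\alpha w(x) = |x|^\beta$. By Proposition \ref{betaspaces}(a) together with Remark \ref{remred}, the supremum in the Morrey norm can be restricted to balls of type II. Choose $s > 1$ small enough that $w^{1-p'} \in A_{p'/s}$.

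For a fixed $B = B(x,r)$ of type II, dualize in $L^p(w)$: write $(\int_B g^p w)^{1/p} = \int_B ghw$ with $\int_B h^{p'} w = 1$, and dominate $hw\chi_B \le M(h^sw^s\chi_B)^{1/s}$, which is an $A_1$ weight. The hypothesis \eqref{hyp2} at $p_0 = 1$ then yields $\int_B ghw \le C \int f M(h^sw^s\chi_B)^{1/s}$, and this integral is split into $\int_{2B}$ plus $\sum_j \int_{2^{j+1}B \setminus 2^j B}$. On $2B$ I would use H\"older and the boundedness of $M$ on $L^{p'/s}(w^{1-p'})$, inserting $|y|^\beta \sim |x|^\beta$ inside the integral of $f^p$ to control it by the Morrey norm. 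On each annulus the value of $M(h^sw^s\chi_B)$ is comparable to $(2^j r)^{-n}\int_B h^s w^s$, which in turn is bounded as in \eqref{p3}; the remaining dyadic average of $f$ is estimated by Lemma \ref{key2pw}(a) when $2^{j+1}B$ is still of type II, and by Lemma \ref{key2pw}(b) (applied to a ball $B(0, 2^{j+3}r)$ containing $2^{j+1}B$) for the indices at which $2^{j+1}B$ has absorbed the origin.

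The main technical obstacle is the bookkeeping of the resulting geometric series; the two halves of the hypothesis translate precisely into the two summability conditions needed. For the small-$j$ range the exponent of $2^j$ has leading term $(\lambda_1 + \lambda_2 - n)/p$, which is negative because $\lambda_1 + \lambda_2 < n$; for the large-$j$ range the leading term is $(\lambda_1 + \lambda_2 - n - \beta(1-\lambda_2/n))/p$, negative iff $\beta(1-\lambda_2/n) > \lambda_1 + \lambda_2 - n$, a rearrangement of the upper hypothesis $\lambda_1 + \lambda_2 (1+\beta/n) < n+\beta$. The requirement $\alpha > \beta - n(p-1)$ is exactly what makes Lemma \ref{key2pw}(b) applicable. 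The residual factor $2^{jn/s'}$ coming from H\"older is absorbed by taking $s$ close enough to $1$. The case $p = 1$ is handled analogously, without duality: choose $\alpha \in [\max(0,\beta),\, \lambda_1 + \lambda_2(1+\beta/n))$ so that $w = |x|^{\beta-\alpha} \in A_1$, and repeat the annular analysis using Lemma \ref{key2pw}, mirroring the $p=1$ portion of the proof of Theorem \ref{teo1K}.
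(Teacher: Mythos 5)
Your proposal is correct and follows essentially the same route as the paper: writing $|x|^\beta=|x|^\alpha w$ with $w=|x|^{\beta-\alpha}\in A_p$ (the paper's $\gamma=\beta-\alpha$), rerunning the proof of Theorem \ref{teo1K} with Lemma \ref{key2pw} in place of Lemma \ref{key2}, and observing that the two halves of the hypothesis on $\beta$ are exactly the applicability condition for Lemma \ref{key2pw}(b) and the convergence of the two geometric series. Your bookkeeping of the exponents of $2^j$ and the treatment of the cases $p_0>1$ and $p=1$ match the paper's argument.
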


\begin{proof}
 We take $w(x)=|x|^\gamma$ for $-n<\gamma<n(p-1)$ and run the proof as in the preceding theorem. Let $\beta=\gamma+\alpha$. Take into account that $w(B)\sim |x|^\gamma r^n$.
 
For the first estimate in the proof, the integral over $2B$, there is no restriction on $\alpha$, that is, no restriction on $\beta$. 
 
 In the second part we have to integrate $f$ over $2^{j+1}B$ and we need to use the estimates of Lemma \ref{key2pw}. When $j\le j_0$, again without condition on $\beta$, from \eqref{smallballpw} we have 
 \begin{equation*}
\frac 1{|2^{j+1}B|}\int_{2^{j+1}B} f\le C (2^{j+1}r)^{\frac{\lambda_1+\lambda_2-n}p}|x|^{-\beta(1-\frac {\lambda_2}{n})\frac 1p}\|f\|_{\mathcal L^{p,\lambda}(|x|^\beta)}.
\end{equation*}
Multiplied by $2^{\frac{jn}{s'}}$ and the factor in front of the integral in \eqref{laubat} we see that if $\lambda_1+\lambda_2-n<0$, we can choose $s$ close to 1 such that the exponent of $2^j$ in the sum is negative. 

When  $j> j_0$ we integrate on a ball centered at the origin as in the previous proof and use \eqref{bigballpw}. Including the factor in \eqref{laubat} we have 
\begin{equation*}
\aligned
&\sum_{j=j_0}^\infty 2^{\frac{jn}{s'}}\frac {|x|^{\alpha\left(1-\frac {\lambda_2}n\right)\frac 1p}}{r^{\frac {\lambda_1}p}\,w(B)^{\frac {\lambda_2}{np}}} \left(\frac 1{|B(0,2^{j+3}r)|}{\int_{B(0,2^{j+3}r)}} f \right) w(B)^{\frac{1}{p}}\\
&\qquad\le C\sum_{j=j_0}^\infty  2^{\frac{jn}{s'}} \frac {|x|^{(\alpha+\gamma)\left(1-\frac {\lambda_2}n\right)\frac 1p}} {r^{\frac{\lambda_1+\lambda_2-n}p}} (2^jr)^{[\lambda_1+\lambda_2-n-\beta(1-\frac {\lambda_2}{n})]\frac 1p}\|f\|_{\mathcal L^{p,\lambda}(|x|^\alpha w)}\\
&\qquad\le C  2^{j_0[\frac{n}{s'}+\lambda_1+\lambda_2-n]} {|x|^{\beta\left(1-\frac {\lambda_2}n\right)\frac 1p}} (2^{j_0}r)^{-\beta(1-\frac {\lambda_2}{n})\frac 1p}\|f\|_{\mathcal L^{p,\lambda}(|x|^\alpha w)},\endaligned
\end{equation*} 
where in the last step we sum the geometric series if the exponent of $2^j$ is negative. This is possible for  $\lambda_1 + \lambda_2 (1+\beta/n)<n+\beta$ by taking $s$ close enough to 1. 
By definition of $j_0$ we have that $|x|\sim 2^{j_0+2}r$. Thus we are left with a power of $2^{j_0}$. The exponent $\lambda_1 + \lambda_2-n+n/s'$ is negative for $s$ close to $1$ because $\lambda_1+\lambda_2-n<0$.
\end{proof}

For the special cases of weighted Morrey spaces described in the introduction the range of power weights obtained for $p>1$ is the same as for the Hardy-Littlewood maximal operator, except the left endpoint (see description after Theorem \ref{pwmaxhl}). The description for $p=1$ is similar, but we need \eqref{hyp2} with $p_0=1$. 

There are other versions of the extrapolation theorem. In particular, it is impossible for operators that are not bounded on $L^p(\rn)$ in the full range $1<p<\infty$ to satisfy \eqref{hyp2}. Nevertheless, in some cases the so-called limited range extrapolation provides a substitute. The proof of the previous theorems in this section can be adapted to cover such case.

\begin{theorem}\label{teolim}
Let $0<p_-\le p_0<p_+\le \infty$. Let $\mathcal F$ be a collection of nonnegative measurable pairs of functions. Assume that for every $(f,g)\in \mathcal F$ and every $w\in A_{\frac{p_0}{p_-}}\cap RH_{\left(\frac{p_+}{p_0}\right)'}$ we have
\begin{equation}\label{hyp3}
\|g\|_{L^{p_0}(w)}\le C \|f\|_{L^{p_0}(w)},
\end{equation}
where $C$ does not depend on the pair $(f,g)$ and it depends on $w$ only in terms of the $A_{\frac{p_0}{p_-}}$ and $RH_{\left(\frac{p_+}{p_0}\right)'}$ constants of $w$. Let $0\le\lambda_2\le n$ and $0<\lambda_1 {\sigma_w'}+ \lambda_2$. Then if $p_-<p<p_+$ and $w\in A_{\frac{p}{p_-}}$ with $\sigma_w> {\left(\frac{p_+}{p}\right)'}$, it holds that 
 \begin{equation*} 
\|g\|_{\mathcal L^{p,\lambda}(|x|^\alpha w)}\le C \|f\|_{\mathcal L^{p,\lambda}(|x|^\alpha w)},
\end{equation*}
for  $\alpha$ as in \eqref{bertiz5} and 
\begin{equation*}
{\lambda_1}\sigma'_w+{\lambda_2}<n\left(1-\frac{p}{p_+}{\sigma'_w}\right).
\end{equation*}

Moreover, if the hypothesis holds for $p_0=p_-$, then the results are valid for $p=p_-$.
\end{theorem}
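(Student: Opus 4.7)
The plan is to reduce Theorem \ref{teolim} to the argument used for Theorem \ref{teo1K} at $p_0=1$, restricted by a reverse H\"older constraint on the auxiliary $A_1$ weight. First I would invoke the classical limited range extrapolation on weighted Lebesgue spaces to reduce \eqref{hyp3} to the case $p_0=p_-$: for every $u\in A_1\cap RH_{(p_+/p_-)'}$, $\|g\|_{L^{p_-}(u)}\le C\|f\|_{L^{p_-}(u)}$. Setting $F=f^{p_-}$ and $G=g^{p_-}$ this reads $\int Gu\le C\int Fu$. Using the identity
\[
\|g\|_{\mathcal L^{p,\lambda}(|x|^\alpha w)}=\|G\|_{\mathcal L^{p/p_-,\lambda}(|x|^\alpha w)}^{1/p_-}
\]
(and similarly for $f$), Theorem \ref{teolim} reduces to showing
\[
\|G\|_{\mathcal L^{p/p_-,\lambda}(|x|^\alpha w)}\le C\|F\|_{\mathcal L^{p/p_-,\lambda}(|x|^\alpha w)},
\]
with $q:=p/p_-\in(1,\infty)$ as the effective Morrey exponent, $w\in A_q$, and the modified hypothesis on $u\in A_1\cap RH_{(p_+/p_-)'}$.

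Second, I would run the duality argument of Theorem \ref{teo1K} for this reduced problem with $p$ replaced by $q$, essentially verbatim, with one decisive modification. The auxiliary power $s$ used to build $M(h^sw^s\chi_B)^{1/s}$ must now satisfy $s>(p_+/p_-)'$, so that by item~(i) of the preliminaries this weight lies in $A_1\cap RH_{(p_+/p_-)'}$ and the reduced hypothesis can be invoked. Every remaining step goes through parallel to the original proof: duality in $L^q(w)$ to produce $h$ with $\int_B h^{q'}w=1$; the pointwise bound $hw\chi_B\le M(h^sw^s\chi_B)^{1/s}$; the decomposition \eqref{bitan} into $2B$ and annular parts; the estimate \eqref{p3} with $q'$ in place of $p'$; and the use of \eqref{smallball} and \eqref{bigball} on the annular pieces.

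Third, on the near piece $2B$ one applies H\"older with exponents $q$ and $q'$ together with the weighted boundedness of $M$ on $L^{q'/s}(w^{1-q'})$; this step requires $w^{1-q'}\in A_{q'/s}$, which is where both $w\in A_q$ and the hypothesis $\sigma_w>(p_+/p)'$ enter as an upper constraint on $s$. On the annular pieces, using \eqref{maxfar} one lands in a geometric series in $j$ whose common ratio has exponent $\frac{n}{s'}+\frac{\lambda_1 p_-}{p}+\frac{p_-(\lambda_2-n)}{p\sigma'}$. Letting $s\to(p_+/p_-)'$ from above (so $n/s'\to np_-/p_+$) and $\sigma\to\sigma_w$ from below, the requirement that this exponent be negative sharpens to
\[
\lambda_1\sigma'_w+\lambda_2<n\left(1-\frac{p}{p_+}\sigma'_w\right),
\]
which is the theorem's hypothesis; the condition $\sigma_w>(p_+/p)'$ is exactly what keeps this range compatible with the lower bound $0<\lambda_1\sigma'_w+\lambda_2$ required for the reduction to type II balls. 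The endpoint case $p=p_-$ follows from a duality-free variant of the argument, exactly as in the end of the proof of Theorem \ref{teo1K}.

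The main obstacle is the joint bookkeeping: verifying that $s$ can be chosen in a nonempty open window where simultaneously (a) $s>(p_+/p_-)'$ so $M(\cdot)^{1/s}\in A_1\cap RH_{(p_+/p_-)'}$, (b) $s$ is small enough that $w^{1-q'}\in A_{q'/s}$, and (c) the exponent of $2^j$ in the annular geometric series is negative for some admissible $\sigma<\sigma_w$. Compatibility of (a) with (c) is precisely the content of the hypothesis on $\lambda$; compatibility of (a) with (b) follows from the openness of the $A_r$- and $RH_\sigma$-classes together with $\sigma_w>(p_+/p)'$.
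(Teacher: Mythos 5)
Your proposal is correct and follows essentially the same route as the paper: rescale by $p_-$ (working with $f^{p_-},g^{p_-}$ and Morrey exponent $p/p_-$), rerun the proof of Theorem \ref{teo1K} with the auxiliary weight $M(h^sw^s\chi_B)^{1/s}$ forced into $A_1\cap RH_{(p_+/p_-)'}$ by taking $s>(p_+/p_-)'$, and let $s\downarrow(p_+/p_-)'$, $\sigma\uparrow\sigma_w$ in the geometric series to recover the stated restriction on $\lambda$. The only detail you leave to "openness" that the paper spells out is the compatibility check, via the equivalence $w\in A_q\cap RH_{(p_+/p)'}\Leftrightarrow w^{1-q'}\in A_{q'/(p_+/p_-)'}$, guaranteeing an admissible $s$ exists.
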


\begin{proof}
The proof can be reduced to considering the case $p_-=1$ and $p_+=b$, where $b\in (1,\infty]$. The case $b=\infty$ corresponds to Theorem \ref{teo1K}. The proof in the case $b<\infty$ is the same as for Theorem \ref{teo1K}, the only difference being that for $p_0=1$ one needs to choose the weight $M(h^sw^s\chi_B)^{1/s}\in A_1\cap RH_{b'}$, that is, $s>b'$. This has to be compatible with the condition $w^{1-p'}\in A_{p'/s}$ also required in the proof. This is possible for $w\in A_p\cap RH_{\left(\frac bp\right)'}$. Indeed, this implies $w^{(b/p)'}\in A_{(b/p)'(p-1)+1}$, which is equivalent to $w^{1-p'}\in A_{p'/b'}$. Hence, there exists $s>b'$ for which $w^{1-p'}\in A_{p'/s}$ and the choice of $s$ is possible. Once we have checked such conditions, the proof follows in the same way and we require the convergence of the geometric series \eqref{sumgeo}. Since $s$ can be taken as close to $b'$ and $\sigma$ as close to $\sigma'_w$ as needed, the condition given in the statement suffices. 

For general $p_-$, one has to scale the exponents defining $b=p_+/p_-$ and replacing $p$ by $p/p_-$.  
\end{proof}

For the specific case of power weights the result corresponding to Theorem \ref{teo2K} under the restricted assumptions of Theorem \ref{teolim} is the following. 

\begin{theorem}\label{teo2Klim}
Let $0<p_-\le p_0<p_+\le \infty$. Let $\mathcal F$ be a collection of nonnegative measurable pairs of functions satisfying the assumptions of Theorem \ref{teolim}. 
Let $\lambda_1 + \lambda_2<n(1-\dfrac p{p_+})$ and let $\beta>-n$ be such that
\begin{equation*}
\max(0, \beta-n(\frac p{p_-}-1))<\lambda_1 + \lambda_2 (1+\beta/n)<n(1-\frac {p_-}{p_+})+\beta.
\end{equation*}
Then for $p_-<p<p_+$ it holds
 \begin{equation}\label{boundmorreypwlim}
\|g\|_{\mathcal L^{p,\lambda}(|x|^\beta)}\le C \|f\|_{\mathcal L^{p,\lambda}(|x|^\beta)}.
\end{equation}
If \eqref{hyp3} holds for $p_0=p_-$, then the conclusion \eqref{boundmorreypwlim} is satisfied for $p=p_-$.
\end{theorem}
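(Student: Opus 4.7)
The plan is to mirror the passage from Theorem \ref{teo1K} to Theorem \ref{teo2K}, but starting from Theorem \ref{teolim} instead of Theorem \ref{teo1K}. As in the proof of Theorem \ref{teolim}, I would first reduce to the base case $p_-=1$ and $p_+=b$ by the same rescaling argument: applying the result to $(f^{p_-},g^{p_-})$ after invoking the corresponding scaling in the limited-range extrapolation for Lebesgue spaces (so $b=p_+/p_-$ and $p$ gets replaced by $p/p_-$ in all the bookkeeping).

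With this reduction in place, I would pick the concrete weight $w(x)=|x|^\gamma$ where $\gamma$ is chosen so that $w\in A_{p/p_-}\cap RH_{(p_+/p)'}$; namely $-n<\gamma<n(p/p_--1)$ together with the reverse-Hölder requirement, which is automatic when $\gamma\ge 0$ and becomes $\gamma>-n(1-p/p_+)$ when $\gamma<0$. Decompose $\beta=\gamma+\alpha$ with $\alpha\ge 0$, so that $|x|^\beta=|x|^\alpha w$; this decomposition has enough freedom to realize every $\beta$ in the range stated in the theorem.

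Now I would run the proof of Theorem \ref{teolim} (which itself mimics that of Theorem \ref{teo1K}) verbatim, the single modification being that at every place an average of $f$ over a ball is estimated via Lemma \ref{key2} I would instead invoke the sharper power-weight bounds of Lemma \ref{key2pw}, exactly as in the proof of Theorem \ref{teo2K}. Concretely, after the decomposition $f=f_1+f_2$, the integral over $2B$ is handled trivially using $|y|\sim|x|$ on $B$, and the annular piece splits at the index $j_0$ with $2^{j_0+1}r\sim|x|/2$: for $j\le j_0$ one applies \eqref{smallballpw} (no restriction on $\beta$), and for $j>j_0$ one applies \eqref{bigballpw} after replacing $2^{j+1}B$ with a ball centered at the origin. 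The compatibility condition inherent in Lemma \ref{key2pw}(b), $\beta(1-\lambda_2/n)<\lambda_1+\lambda_2+n(p/p_--1)$, becomes exactly the left inequality $\beta-n(p/p_--1)<\lambda_1+\lambda_2(1+\beta/n)$, while the condition $\lambda_1+\lambda_2(1+\beta/n)>0$ is the usual reduction-to-type-II requirement from Remark \ref{remred}.

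The main obstacle, and the source of the upper bound in the stated range, is verifying that the geometric series produced by the tail $j>j_0$ converges. Combining the factor $2^{jn/s'}$ (where $s>(p_+/p_-)'$ as in the proof of Theorem \ref{teolim}) with the $(2^jr)^{[\lambda_1+\lambda_2-n-\beta(1-\lambda_2/n)]/p}$ coming from \eqref{bigballpw}, the exponent of $2^j$ in the sum is $n/s'+(\lambda_1+\lambda_2-n-\beta(1-\lambda_2/n))/p$; taking $s\downarrow(p_+/p_-)'$ and rearranging yields exactly the upper inequality $\lambda_1+\lambda_2(1+\beta/n)<n(1-p_-/p_+)+\beta$ (after the $p_-$-scaling). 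The basic restriction $\lambda_1+\lambda_2<n(1-p/p_+)$ is needed to accommodate the simpler tail calculation already present in Theorem \ref{teolim} for the case $|x|\le r_{B'}$. The residual check is that the partial exponents of $|x|$ and $r$ collected from \eqref{laubat}, \eqref{smallballpw}, \eqref{bigballpw} and the factor $w(B)^{1/p}\sim (|x|^\gamma r^n)^{1/p}$ telescope into the right prefactor as they did in the proof of Theorem \ref{teo2K}; this is routine but needs care, as $\gamma$ and $\alpha$ appear in separate places and only their sum $\beta$ should survive in the final estimate.
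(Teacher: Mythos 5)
The paper gives no explicit proof of this theorem---it is left as the combination of the proof of Theorem \ref{teo2K} (take $w=|x|^\gamma$, write $\beta=\gamma+\alpha$, and replace Lemma \ref{key2} by Lemmas \ref{keypw} and \ref{key2pw}) with the limited-range modification of Theorem \ref{teolim} (choose $s>(p_+/p_-)'$ instead of $s$ close to $1$, after rescaling to $p_-=1$, $p_+=b$)---and your proposal is exactly that combination, with the hypotheses entering where you say they do. One caveat: if you actually carry out the rearrangement you describe for the tail $j>j_0$ in the reduced setting ($\tilde p=p/p_-$, $n/s'\downarrow n p_-/p_+$), the geometric series converges under the condition $\lambda_1+\lambda_2(1+\beta/n)<n(1-p/p_+)+\beta$, not $n(1-p_-/p_+)+\beta$ as you assert; the two coincide only at $p=p_-$, and for $p>p_-$ the former is strictly more restrictive. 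So either the paper's stated upper bound has $p_-$ where the argument produces $p$ (plausible, since the companion hypothesis $\lambda_1+\lambda_2<n(1-p/p_+)$ in the same statement does use $p$), or your claim that the computation ``yields exactly'' the stated inequality must be withdrawn---this is precisely the exponent bookkeeping you flag as ``routine but needs care,'' and it is the one place where the care changes the conclusion.
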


\subsection{Embeddings}
A byproduct of the proof of Theorem \ref{teo1K} is the inequality
\begin{equation*}
\int_{\rn} f M(h^sw^s\chi_B)^{1/s}\le C \|f\|_{\mathcal L^{p,\lambda}(|x|^\alpha w)},
\end{equation*}
which implies the continuous embedding of the Morrey space $\mathcal L^{p,\lambda}(|x|^\alpha w)$ into the Lebesgue space $L^1(M(h^sw^s\chi_B)^{1/s})$. Therefore,
\begin{equation*}
\mathcal L^{p,\lambda}(|x|^\alpha w)\subset \bigcup_{u\in A_1} L^1(u).
\end{equation*}
For $p>1$, the argument at the end of the proof of Theorem \ref{teo1K} can be used to write 
\begin{equation*}
\mathcal L^{p,\lambda}(|x|^\alpha w)\subset \bigcup_{u\in A_1} L^q(u)\subset \bigcup_{u\in A_q} L^q(u),
\end{equation*}
for some $q\in (1,p)$. It was proved in \cite{KMM16} that the last term is independent of $q>1$ (see another proof in \cite{DR18}). 

By this embedding, if we have an operator $T$ that satifies the inequalities \eqref{hyp2} for some $p_0$ in the sense that the pairs considered in the statement are of the form $(|f|, |Tf|)$, then $T$ is already defined on the Morrey spaces for which the estimate has been proved in the theorem. 

A similar argument applies to the spaces appearing in Theorems \ref{teo2K}, \ref{teolim} and \ref{teo2Klim}.

\subsection{Applications}
Many known operators satisfy the assumptions of considering the pairs $(|f|,|Tf|)$, where $T$ is the operator. Therefore, all of them are bounded on weighted Morrey spaces of the form $\mathcal L^{p,\lambda}(|x|^\alpha w)$ for $p>1$ and $\alpha$ and $\lambda$ as stated. For the particular case of power weighted Morrey spaces, $\mathcal L^{p,\lambda}(|x|^\beta)$, the results of Theorem \ref{teo2K} apply. 

A list of operators fulfilling the requirements are:    
\begin{itemize}
\item Calder\'on-Zygmund operators and their associated maximal operators defined by the truncated integrals;
\item classical square functions (Lusin area integral, Littlewood-Paley $g$-function, etc.);
\item rough singular integrals with kernel p.v.\,$\Omega(x/|x|)|x|^{-n}$ with $\Omega\in L^\infty(\sn)$ and vanishing integral;
\item Bochner-Riesz operators at the critical index;
\item commutators of any linear operator of the previous list with a BMO function.
\end{itemize}
Except for the last case, all the other operators satisfy weighted weak-type estimates with $A_1$ weights for $p=1$. As a consequence of the theorems they satisfy weighted weak-type estimates also in the corresponding Morrey spaces. To apply the theorems in this case one uses the pairs $(|f|, t\chi_{\{|Tf|>t\}})$ with $p_0=1$. Also vector-valued  inequalities in weighted Morrey spaces are deduced directly from the abstract formulation of the theorems in terms of pairs of functions. 

See \cite{DR18} for more details and references about the $A_p$-weighted inequalities of the mentioned operators. In the same paper the reader will find operators to which the versions given in Theorems \ref{teolim} and \ref{teo2Klim} apply.

\section*{Acknowledgements} 
The first author is supported by the grants MTM2017-82160-C2-2-P of the Ministerio de Econom\'{\i}a y Competitividad (Spain) and FEDER, and IT1247-19 of the Basque Gouvernment. He would like to thank Yoshihiro Sawano for sharing with him the paper \cite{NST19} before it was published.

Part of this work was carried out during several visits of the second author to the University of the Basque Country (UPV/EHU). He would like to acknowledge the partial support of the grants IT-641-13 and IT1247-19 of the Basque Gouvernment.


\end{document}